\newtheorem{thm}{Theorem}[section]
 \newtheorem{lem}[thm]{Lemma}
 \newtheorem{prop}[thm]{Proposition}
 \theoremstyle{definition}
 \newtheorem{definition}[thm]{Definition}
\newtheorem{rem}[thm]{Remark}
 \theoremstyle{remark}
 \numberwithin{equation}{section}
\DeclareMathOperator{\lcm}{lcm}
\begin{document}

\title{Weighted Yosida Mappings of Several Complex Variables}
\author[N. Bharti]{Nikhil Bharti}
\address{
\begin{tabular}{lll}
&Nikhil Bharti\\
&Department of Mathematics\\
&University of Jammu\\
&Jammu-180006\\ 
&India\\
\end{tabular}}
\email{nikhilbharti94@gmail.com}

\author[N. V. Thin]{Nguyen Van Thin$^{*}$}
\address{
\begin{tabular}{lll}
&Nguyen Van Thin\\
&Department of Mathematics\\
&Thai Nguyen University of Education\\
&Luong Ngoc Quyen street\\
&Thai Nguyen City, Thai Nguyen\\ 
&Vietnam\\
\end{tabular}}
\email{thinmath@gmail.com}
\thanks{$^*$ Corresponding author: Nguyen Van Thin}

\begin{abstract}
Let $M$ be a complete complex Hermitian manifold with metric $E_{M}$ and let $\varphi: [0,\infty)\rightarrow (0,\infty)$ be positive function such that $$\gamma_r=\sup\limits_{r\leq a<b}\left|(\varphi(a)-\varphi(b))/(a-b)\right|\leq C,~r\in (0,\infty),$$ for some $C\in (0,1],$ and $\lim_{r\rightarrow\infty}\gamma_r=0.$ A holomorphic mapping $f:\mathbb{C}^{m}\rightarrow M$ is said to be a weighted Yosida mapping if for any $z,~\xi\in\mathbb{C}^{m}$ with $\|\xi\|=1,$ the quantity $\varphi(\|z\|)E_{M}(f(z), df(z)(\xi))$ remains bounded above, where $df(z)$ is the map from $T_z(\mathbb{C}^{m})$ to $T_{f(z)}(M)$ induced by $f.$ We present several criteria of holomorphic mappings belonging to the class of all weighted Yosida mappings.
\end{abstract}

\renewcommand{\thefootnote}{\fnsymbol{footnote}}
\footnotetext{2010 {\it Mathematics Subject Classification}. 32A10, 32H02, 32C15,
 32H30.}
\footnotetext{{\it Keywords and phrases}. Holomorphic mappings, weighted Yosida mappings, complex projective spaces.}

\maketitle

\section{\bf Introduction}
The main objective of this paper is to extend the concept of weighted Yosida functions to holomorphic mappings of several complex variables. A function $f$ which is meromorphic in $\mathbb{C},$ belongs to the class $\mathcal{Y}_{\varphi}(\mathbb{C})$ of weighted Yosida functions if 
\begin{equation}\label{eq:I-1}
\limsup\limits_{|z|\to\infty}\varphi(|z|)f^{\#}(z)<\infty,
\end{equation}
 where $$f^{\#}(z)=\frac{|f'(z)|}{1+|f(z)|^2}$$ is the spherical derivative of $f,$ and $\varphi: [0,\infty)\rightarrow (0,\infty)$ is a  positive function such that $$\gamma_r:=\sup\limits_{r\leq a<b}\left|\frac{\varphi(a)-\varphi(b)}{a-b}\right|\leq C,~r\in (0,\infty),$$ for some $C\in (0,1],$ and $\lim\limits_{r\rightarrow\infty}\gamma_r=0.$ 

\medskip

If $\varphi\equiv 1,$ then the class $\mathcal{Y}_{1}(\mathbb{C})$ is the class of Yosida functions (see \cite{aulaskari, yosida}). If, in \eqref{eq:I-1}, $\varphi(z)= z^{2-p},$ for $~1\leq p<\infty,$ then $f\in\mathcal{Y}_{p}(\mathbb{C})$ and we say that $f$ is $p$-Yosida function (see \cite{makhmutov}). The study of Yosida functions plays an important role in the theory of meromorphic functions. For instance, it has been established that doubly periodic functions are Yosida functions. Moreover, Julia exceptional functions are just transcendental functions belonging to the class $\mathcal{Y}_{1}(\mathbb{C})$ (see \cite{aulaskari, makhmutov}). Recently, Yosida mappings of several complex variables has gained a lot of interest. An extension of $p$-Yosida functions to holomorphic functions of several variables obtained by Yang \cite{yang}. However, to the best of our knowledge, the study of weighted Yosida functions in several complex variables is still pending. In this paper, our aim is to fill this gap in the literature and to open new possibilities for future study. More precisely, we present several criteria for a holomorphic mapping to belong to the class of all weighted Yosida mappings. As applications, we obtain variants of Lappan's five point theorem for weighted Yosida mappings in several complex variables. Before we state our results, let us recall some standard terminologies and for more deeper insights, we refer the reader to \cite{huybrechts, szekelyhidi}.    
 
Let $M$ be a complete complex Hermitian manifold of dimension $k$ with a Hermitian metric $$ds_{M}^{2}=\sum\limits_{i,j=1}^{k}h_{ij}(p)dz_i\bar{dz_j},$$ where $z=(z_1,~\ldots,~z_k)$ are local coordinates in a neighbourhood of a point $p\in M.$ A differential form of type $(1,1)$ is associated to $ds_{M}^{2}$ as $$\omega_{M}=\sqrt{-1}\sum\limits_{i,j=1}^{k}h_{ij}(p)dz_i\wedge\bar{dz_j}.$$

We denote by $E_{M}(p,\xi),$ the metric for $M$ at $p$ in the direction of the vector $\xi\in T_{p}(M),$ where $T_{p}(M)$ is the complexified tangent space to $M$ at $p.$ Then $$E_{M}(p,\xi)=(ds_{M}^{2}(\xi, \bar{\xi}))^{1/2}.$$ Thus the distance between two points $p,~q\in M$ is given by $$d_{M}(p,q):=\inf\limits_{\gamma}\int\limits_{0}^{1}E_{M}(\gamma(t), \gamma'(t))~dt,$$ where the infimum is taken over all parametric curves $\gamma: [0,1]\rightarrow M$ with $\gamma(0)=p$ and $\gamma(1)=q.$ 

Let $M=P^{n}(\mathbb{C}),$ the $n$-dimensional complex projective space, that is, the space $\left(\mathbb{C}^{n+1}\setminus\{0\}\right)/\sim,$ where $$(x_0,\ldots, x_n)\sim (y_0,\ldots, y_n) \iff (x_0,\ldots, x_n)=\lambda(y_0,\ldots, y_n),$$ for some $\lambda\in\mathbb{C}\setminus\{0\}.$ Then a point in $P^{n}(\mathbb{C}$) is an equivalence class of some $(x_0,\ldots, x_n)\in\mathbb{C}^{n+1}\setminus\{0\},$ which is written as $[x_0,\ldots, x_n]$ and is called the homogeneous coordinate of the point. For a system of homogeneous coordinates $x=[x_0:\cdots:x_n]$ in $P^{n}(\mathbb{C}),$ the metric $ds_{P^{n}(\mathbb{C})}^{2}$ can be expressed as $$ds_{P^{n}(\mathbb{C})}^{2}=\frac{\langle dx,dx \rangle\langle x,x \rangle-|\langle x,dx \rangle|^2}{\langle x,x \rangle^2},$$ where $\langle \textbf{.},\textbf{.} \rangle$ denotes the standard Hermitian product in $\mathbb{C}^{n}.$ This is known as the {\it Fubini-Study metric} in homogeneous coordinates on $P^{n}(\mathbb{C}).$ Note that for $n=1,$ $P^{1}(\mathbb{C})$ is identified with the extended complex plane $\mathbb{C}\cup\{\infty\}.$  

\section{\bf Weighted Yosida mappings}

We now introduce weighted Yosida mappings of several complex variables. For convenience, we shall denote by $\mathcal{H}(X, Y),$ the class of all holomorphic mappings from the complex space $X$ to the complex space $Y.$ 

\begin{definition}
 Let $f\in\mathcal{H}(\mathbb{C}^{m}, M).$ Then $f$ belongs to the class $\mathcal{Y}_{\varphi}\left(\mathbb{C}^{m}, M\right)$ of weighted Yosida mappings if $$\limsup\limits_{\|z\|\rightarrow\infty,~ \|\xi\|=1}\varphi(\|z\|)E_{M}(f(z), df(z)(\xi))<\infty,$$ where $df(z)$ is the map from $T_z(\mathbb{C}^{m})$ to $T_{f(z)}(M)$ induced by $f.$
\end{definition}

In the following, we establish a necessary and sufficient condition for holomorphic mappings belonging to the class $\mathcal{Y}_{\varphi}\left(\mathbb{C}^{m}, M\right).$ Here and after, it is assumed that $M$ is compact. Recall that for a domain $D\subseteq\mathbb{C}^{m},$ a family $\mathcal{F}\subset\mathcal{H}(D, M)$ is said to be normal if every sequence of functions in $\mathcal{F}$ has a subsequence which is relatively compact in $\mathcal{H}(D, M)$ (see \cite{aladro, wu, thai}).

\begin{prop}\label{prop:1}
Let $f\in\mathcal{H}(\mathbb{C}^{m}, M).$ Then $f\in\mathcal{Y}_{\varphi}(\mathbb{C}^{m}, M)$ if and only if for all sequences $\left\{z_j\right\},~\left\{\xi_j\right\}\subset\mathbb{C}^{m}$ with $\|z_j\|\rightarrow\infty$ and $\|\xi_j\|=1,$ the family $$\mathcal{F}=\left\{F_j(\zeta):=f(z_j+\varphi(\|z_j\|)\xi_j\zeta),~j\in\mathbb{N}\right\}$$ is normal on $\mathbb{C}.$
\end{prop}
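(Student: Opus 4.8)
The plan is to prove both directions via the standard Zalcman-type rescaling argument adapted to the Hermitian-metric setting, with the Marty-type criterion (normality $\iff$ locally uniformly bounded derivatives, valid since $M$ is compact) as the bridge.

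For the ``only if'' direction, assume $f\in\mathcal{Y}_{\varphi}(\mathbb{C}^m,M)$, so there are constants $R_0,\,K$ with $\varphi(\|z\|)E_M(f(z),df(z)(\xi))\le K$ whenever $\|z\|\ge R_0$ and $\|\xi\|=1$. Fix sequences $\{z_j\},\{\xi_j\}$ as in the statement and set $F_j(\zeta)=f(z_j+\varphi(\|z_j\|)\xi_j\zeta)$. To show $\mathcal{F}$ is normal on $\mathbb{C}$ it suffices, by the Marty-type criterion for $\mathcal{H}(D,M)$ with $M$ compact, to bound $E_M(F_j(\zeta),F_j'(\zeta))$ uniformly on each disc $|\zeta|\le \rho$. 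By the chain rule, $F_j'(\zeta)=df(w)(\varphi(\|z_j\|)\xi_j)$ where $w=z_j+\varphi(\|z_j\|)\xi_j\zeta$, so $E_M(F_j(\zeta),F_j'(\zeta))=\varphi(\|z_j\|)\,E_M(f(w),df(w)(\xi_j))$. The key step is to compare $\varphi(\|w\|)$ with $\varphi(\|z_j\|)$: since $\|w\|-\|z_j\|$ is controlled by $\varphi(\|z_j\|)\rho$ and the Lipschitz-type bound $\gamma_r\le C\le 1$ forces $\varphi(\|z_j\|)$ to grow at most linearly (indeed $|\varphi(a)-\varphi(b)|\le C|a-b|$ for $a,b\ge 0$ after passing to the limit $r\to 0$, or at least for $a,b$ bounded below), one gets $\varphi(\|z_j\|)\le \varphi(\|w\|)\cdot(1+o(1))$ uniformly for $|\zeta|\le\rho$ as $j\to\infty$. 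Hence for $j$ large, $\|w\|\ge R_0$ and $E_M(F_j(\zeta),F_j'(\zeta))\le (1+o(1))\,\varphi(\|w\|)E_M(f(w),df(w)(\xi_j))\le (1+o(1))K$, which is the desired uniform bound; dropping finitely many $j$ does not affect normality.

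For the ``if'' direction I would argue by contraposition: suppose $f\notin\mathcal{Y}_{\varphi}(\mathbb{C}^m,M)$. Then there exist sequences $\{z_j\}$ with $\|z_j\|\to\infty$ and $\{\xi_j\}$ with $\|\xi_j\|=1$ such that $\varphi(\|z_j\|)E_M(f(z_j),df(z_j)(\xi_j))\to\infty$. Form $F_j(\zeta)=f(z_j+\varphi(\|z_j\|)\xi_j\zeta)$ as above. Then $E_M(F_j(0),F_j'(0))=\varphi(\|z_j\|)E_M(f(z_j),df(z_j)(\xi_j))\to\infty$, so the family $\{F_j\}$ violates the Marty-type criterion at $\zeta=0$ and cannot be normal on any neighbourhood of $0$, hence not on $\mathbb{C}$. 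This produces sequences for which $\mathcal{F}$ fails to be normal, contradicting the hypothesis.

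\medskip

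\noindent\textbf{Main obstacle.} The delicate point is the ``only if'' direction: controlling $\varphi(\|w\|)/\varphi(\|z_j\|)$ uniformly over the disc and making precise in what sense $\gamma_r\le C\le 1$ yields a global (or eventually global) Lipschitz bound on $\varphi$, so that the perturbation $\|w\|-\|z_j\|=O(\varphi(\|z_j\|))$ is genuinely negligible compared to $\varphi(\|z_j\|)$ itself. One must also be careful that the direction vectors $\xi_j$ vary with $j$, so the supremum defining membership in $\mathcal{Y}_\varphi$ must be invoked uniformly in $\xi$, which the definition indeed provides. The Marty-type normality criterion for holomorphic maps into a compact Hermitian manifold (boundedness of $E_M(F_j,F_j')$ on compacta $\iff$ normality) is the other ingredient I would cite from the literature (\cite{aladro, wu, thai}) rather than reprove.
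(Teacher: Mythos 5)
Your proposal is correct and follows essentially the same route as the paper: both directions hinge on the Marty-type criterion of Thai--Trang--Huong (Lemma~\ref{lem:1}), with the forward direction reducing to the estimate $\varphi(\|w_j\|)/\varphi(\|z_j\|)\to 1$ (which comes from $\gamma_r\to 0$, the point you correctly flag as the delicate step, rather than from linear growth of $\varphi$) and the converse obtained by evaluating $E_M(F_j(0),dF_j(0)(1))$ and letting it blow up. Your handling of the limsup via a threshold $R_0$ is in fact slightly more careful than the paper's global bound $K_0$.
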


To prove Proposition \ref{prop:1}, we need the following result of Thai, Trang and Huong \cite{thai}:

\begin{lem}\label{lem:1}
Let $D\subset\mathbb{C}^{m}$ be a domain and let $M$ be a complex space. Let $\mathcal{F}\subset\mathcal{H}(\mathbb{C}^{m}, M)$ and suppose that $\mathcal{F}$ is normal on $D.$ Then for each norm $E_M$ on $T(M)$ and for each compact subset $K$ of $D,$ there is a constant $C_K>0$ such that 
\begin{equation}\label{eq:1}
E_M(f(z), df(z)(\xi))\leq C_K\|\xi\|
\end{equation}
for every $z\in K,~\xi\in\mathbb{C}^{m}\setminus\left\{0\right\}$ and $f\in\mathcal{F}.$ Conversely, if $M$ is a complete Hermitian complex space and if the family $\mathcal{F}$ is not completely divergent such that \eqref{eq:1} holds, the $\mathcal{F}$ is normal on $D.$
\end{lem}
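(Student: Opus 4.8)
The plan is to treat the two implications separately, since they rest on different machinery: the forward direction is a compactness-plus-continuity argument, whereas the converse is an Arzel\`a--Ascoli argument adapted to maps into a complete metric space. Throughout I use that $df(z)$ is $\mathbb{C}$-linear in $\xi$ and that $E_M(p,\cdot)$ is a norm on $T_p(M)$ arising from a Hermitian form, so that the left-hand side of \eqref{eq:1} is homogeneous of degree one in $\xi$.

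For the forward direction I would first reduce to the unit sphere: by the homogeneity just noted it suffices to bound
$$\sup\{E_M(f(z), df(z)(\xi)) : z\in K,\ \|\xi\|=1,\ f\in\mathcal{F}\}$$
and then rescale an arbitrary $\xi$ to obtain the factor $\|\xi\|$ in \eqref{eq:1}. Suppose this supremum were infinite. Then pick $z_n\in K$, $\|\xi_n\|=1$, $f_n\in\mathcal{F}$ with $E_M(f_n(z_n), df_n(z_n)(\xi_n))\to\infty$. By compactness of $K$ and of the unit sphere, pass to subsequences with $z_n\to z_0\in K$ and $\xi_n\to\xi_0$, $\|\xi_0\|=1$; by normality a further subsequence of $\{f_n\}$ converges in $\mathcal{H}(D,M)$ uniformly on compacta to some $g$. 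The crux is that compact-open convergence of holomorphic maps forces convergence of the induced differentials on compacta: in a finite coordinate cover of a neighborhood of $g(z_0)$ into which $f_n(z_n)$ eventually falls, the coordinate functions of $f_n$ converge uniformly, so by the Cauchy estimates their first-order partials converge uniformly as well. Continuity of $E_M$ in both its base-point and vector arguments then gives $E_M(f_n(z_n), df_n(z_n)(\xi_n))\to E_M(g(z_0), dg(z_0)(\xi_0))<\infty$, a contradiction, and $C_K$ exists.

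For the converse I would first upgrade the infinitesimal bound \eqref{eq:1} to a uniform Lipschitz estimate. Covering a compact $K\subset D$ by finitely many convex compact pieces and integrating \eqref{eq:1} along the straight segment $\sigma(t)=(1-t)z+tw$ gives
$$d_M(f(z), f(w))\leq\int_0^1 E_M\big(f(\sigma(t)), df(\sigma(t))(w-z)\big)\,dt\leq C_K\,\|w-z\|$$
for every $f\in\mathcal{F}$, so $\mathcal{F}$ is uniformly Lipschitz, hence equicontinuous, as a family of maps into $(M,d_M)$. Next I would invoke completeness: since $M$ is a complete Hermitian, finite-dimensional (hence locally compact) space, a Hopf--Rinow-type argument makes $(M,d_M)$ proper, so closed bounded sets are compact. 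Because $\mathcal{F}$ is not completely divergent, a given sequence may, after passing to a subsequence, be assumed non-divergent, so there is a point $z_*\in D$ with $\{f_n(z_*)\}$ in a compact subset of $M$; the Lipschitz estimate then confines $\{f_n(z)\}$ to a bounded, hence relatively compact, subset of $M$ for every $z$ in a compact neighborhood of $z_*$, and this boundedness propagates over all of the connected domain $D$ by a chaining argument. Arzel\`a--Ascoli for equicontinuous, pointwise relatively compact families into a complete metric space delivers a subsequence converging uniformly on compacta to a continuous $g:D\to M$; holomorphicity of $g$ is then local, following from Weierstrass' theorem applied to the uniformly convergent coordinate maps in charts. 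Thus $g\in\mathcal{H}(D,M)$, the subsequence is relatively compact there, and $\mathcal{F}$ is normal.

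I expect the main obstacle to lie in the converse, precisely in the passage from \emph{one point stays in a compact set} to \emph{the family is pointwise relatively compact on all of $D$}. This is where completeness of $M$ is indispensable: equicontinuity alone controls oscillation, but without properness of $(M,d_M)$ a bounded orbit need not be relatively compact, and without the connectivity-based propagation the control at $z_*$ would not spread. Verifying the Hopf--Rinow conclusion for a possibly singular complete Hermitian complex space, rather than a smooth manifold, is the most delicate technical point, and I would either restrict to the manifold case or replace $d_M$ by the intrinsic length metric and argue properness directly from completeness and local compactness.
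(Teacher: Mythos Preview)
The paper does not supply its own proof of this lemma: it is quoted verbatim as ``the following result of Thai, Trang and Huong \cite{thai}'' and used as a black box in the proof of Proposition~\ref{prop:1}. So there is no in-paper argument to compare your proposal against.

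That said, your outline is essentially the standard proof one finds in the source \cite{thai} (and in the earlier normal-families literature going back to Wu \cite{wu} and Aladro--Krantz \cite{aladro}): the forward direction is exactly the compactness-plus-Cauchy-estimates contradiction you describe, and the converse is the equicontinuity/Arzel\`a--Ascoli route via integrating the infinitesimal bound. Your identification of the delicate step---passing from ``not compactly divergent'' to pointwise relative compactness, and invoking a Hopf--Rinow-type properness statement for a possibly singular complete Hermitian complex space---is accurate; in \cite{thai} this is handled by working with the intrinsic length metric and the local structure of complex spaces rather than by appealing to smooth Hopf--Rinow. If you want a self-contained write-up, that is the point to make precise; otherwise your sketch matches the cited argument.
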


\begin{proof}[\bf Proof of Proposition \ref{prop:1}]
Suppose that $f\in\mathcal{Y}_{\varphi}(\mathbb{C}^{m}, M),~\left\{z_j\right\},~\left\{\xi_j\right\}\subset\mathbb{C}^{m},$ with $\|z_j\|\rightarrow\infty$ and $\|\xi_j\|=1.$ Then by definition of weighted Yosida mappings, there exists $K_0>0$ such that for $z,~\xi\in\mathbb{C}^{m}$ with $\|\xi\|=1,$ we have $$\varphi(\|z\|)E_{M}(f(z), df(z)(\xi))\leq K_0.$$ Let $R>0$ and $w_j:=z_j+\varphi(\|z_j\|)\xi_j\zeta,~|\zeta|<R.$
Then 
\begin{align}\label{eq:2}
E_{M}(F_j(\zeta), dF_j(\zeta)(1)) &=\varphi(\|z_j\|)E_{M}(f(w_j), df(w_j)(\xi_j))\nonumber\\
&=\frac{\varphi(\|z_j\|)}{\varphi(\|w_j\|)}\cdot\varphi(\|w_j\|) E_{M}(f(w_j), df(w_j)(\xi_j))\nonumber\\
&\leq K_0\frac{\varphi(\|z_j\|)}{\varphi(\|w_j\|)}.
\end{align}
Since $\|w_j-z_j\|<\varphi(\|z_j\|)R,$ we have
$$\left|\frac{\varphi(\|w_j\|)}{\varphi(\|z_j\|)}-1\right|=\frac{\left|\varphi(\|w_j\|)-\varphi(\|z_j\|)\right|}{\varphi(\|z_j\|)}\leq \gamma_r\frac{\left|\|w_j\|-\|z_j\|\right|}{\varphi(\|z_j\|)}<\gamma_r\frac{\varphi(\|z_j\|)}{\varphi(\|z_j\|)}R=\gamma_rR.$$ Since $\gamma_r\rightarrow 0$ as $r\rightarrow\infty,$ so $\varphi(\|w_j\|)/\varphi(\|z_j\|)\rightarrow 1$ as $\|z_j\|\rightarrow\infty.$ Thus from \eqref{eq:2} and Lemma \ref{lem:1}, we conclude that the family $\mathcal{F}=\left\{F_j(z):=f(z_j+\varphi(\|z_j\|)\xi_j\zeta),~j\in\mathbb{N}\right\}$ is normal on $\mathbb{C}.$\\
Conversely, suppose, on the contrary, that $f\notin \mathcal Y_{\varphi}(\mathbb{C}^{m}, M).$ Then there exist $\left\{z_j\right\},~\left\{\xi_j\right\}\subset\mathbb{C}^{m},$ with $\|z_j\|\rightarrow\infty$ and $\|\xi_j\|=1$ such that $$\lim\limits_{j\rightarrow\infty}\varphi(\|z_j\|)E_{M}(f(z_j), df(z_j)(\xi))=\infty.$$ Now, consider the family $\mathcal{F}=\left\{F_j(z):=f(z_j+\varphi(\|z_j\|)\xi_j\zeta),~j\in\mathbb{N}\right\}.$ Then $$E_{M}(F_j(0), dF_j(0)(1))=\varphi(\|z_j\|)E_{M}(f(z_j), df(z_j)(\xi))\rightarrow\infty\mbox{ as } j\rightarrow\infty.$$ Clearly, by Lemma \ref{lem:1}, $\mathcal{F}$ is not normal on $\mathbb{C}.$
\end{proof}

\begin{rem}\label{rem:1} Let $f\in\mathcal{Y}_{\varphi}(\mathbb{C}^{m}, M).$ Then by Proposition \ref{prop:1}, for all sequences $\left\{z_j\right\},~\left\{\xi_j\right\}\subset\mathbb{C}^{m}$ with $\|z_j\|\rightarrow\infty$ and $\|\xi_j\|=1,$ the family $\mathcal{F}=\left\{F_j(\zeta):=f(z_j+\varphi(\|z_j\|)\xi_j\zeta),~j\in\mathbb{N}\right\}$ is normal on $\mathbb{C}.$ If, in addition, no subsequence of $\mathcal{F}$ has a constant limit, then we say that $f$ is a {\it weighted Yosida mapping of first category} and write it as $f\in\mathcal{Y}_{\varphi}^{0}(\mathbb{C}^{m}, M).$
\end{rem}

With Remark \ref{rem:1} in hand, it is natural to ask when a weighted Yosida mapping belongs to the class $\mathcal{Y}_{\varphi}^{0}(\mathbb{C}^{m}, M).$ The following result provides an answer to this problem:

\begin{thm}\label{thm:1.1} Let $f\in\mathcal{Y}_{\varphi}(\mathbb{C}^{m}, M),~D_{\varphi}(a, \epsilon):=\{z\in\mathbb{C}^{m}: \|z-a\|<\epsilon\varphi(\|a\|)$ and $D(a, \epsilon):=\{z\in\mathbb{C}^{m}: \|z-a\|<\epsilon.$ Then $f\in\mathcal{Y}_{\varphi}^{0}(\mathbb{C}^{m}, M)$ if and only if the any one of following conditions hold:
\begin{itemize}
\item[(i)] For any $\epsilon>0,$ $$\liminf\limits_{\|a\|\to\infty}\max\limits_{z\in D_{\varphi}(a, \epsilon),~\|\xi\|=1}\varphi(\|z\|)E_{M}(f(z), df(z)(\xi))>0.$$
\item[(ii)] For any $\epsilon>0,$ $$\liminf\limits_{\|a\|\to\infty}\int_{z\in D_{\varphi}(a, \epsilon)}f^*\omega_M(z)>0,$$ where $f^*\omega_M$ is the pullback of the metric $\omega_M$ under $f.$
\end{itemize}
\end{thm}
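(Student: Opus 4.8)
The plan is to deduce the statement from the two equivalences $f\in\mathcal{Y}_{\varphi}^{0}(\C^{m},M)\Leftrightarrow\mathrm{(i)}$ and $\mathrm{(i)}\Leftrightarrow\mathrm{(ii)}$. The workhorse is the rescaled family $F_{j}(\zeta)=f(z_{j}+\varphi(\|z_{j}\|)\xi_{j}\zeta)$ from Proposition~\ref{prop:1}, together with Lemma~\ref{lem:1} and the estimate already isolated in the proof of Proposition~\ref{prop:1}, namely $\varphi(\|w\|)/\varphi(\|z\|)\to1$ (uniformly in the parameters at hand) whenever $\|w-z\|\le\varphi(\|z\|)R$ and $\|z\|\to\infty$; compactness of $M$ ensures that no family in sight is completely divergent, so Lemma~\ref{lem:1} applies freely.

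For $f\in\mathcal{Y}_{\varphi}^{0}\Rightarrow\mathrm{(i)}$ I would argue by contraposition. If $\mathrm{(i)}$ fails, pick $\epsilon_{0}>0$ and $a_{j}$ with $\|a_{j}\|\to\infty$ and $M_{j}:=\max_{z\in D_{\varphi}(a_{j},\epsilon_{0}),\ \|\xi\|=1}\varphi(\|z\|)E_{M}(f(z),df(z)(\xi))\to0$. For arbitrary unit vectors $\xi_{j}$ set $G_{j}(\zeta)=f(a_{j}+\varphi(\|a_{j}\|)\xi_{j}\zeta)$; when $|\zeta|<\epsilon_{0}$ the point $w_{j}=a_{j}+\varphi(\|a_{j}\|)\xi_{j}\zeta$ lies in $D_{\varphi}(a_{j},\epsilon_{0})$, so writing $E_{M}(G_{j}(\zeta),dG_{j}(\zeta)(1))=\dfrac{\varphi(\|a_{j}\|)}{\varphi(\|w_{j}\|)}\cdot\varphi(\|w_{j}\|)E_{M}(f(w_{j}),df(w_{j})(\xi_{j}))$ gives
\begin{equation*}
E_{M}(G_{j}(\zeta),dG_{j}(\zeta)(1))\le\frac{\varphi(\|a_{j}\|)}{\varphi(\|w_{j}\|)}\,M_{j}\longrightarrow0
\end{equation*}
uniformly on $\{|\zeta|<\epsilon_{0}/2\}$. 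Since $\{G_{j}\}$ is normal (Proposition~\ref{prop:1}), a subsequence converges locally uniformly on $\C$ to a holomorphic $G\colon\C\to M$; the spherical derivatives converge as well, so $dG\equiv0$ on $\{|\zeta|<\epsilon_{0}/2\}$, hence $G$ is constant there and, by analytic continuation, on all of $\C$. Thus $f\notin\mathcal{Y}_{\varphi}^{0}$.

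The reverse implication $\mathrm{(i)}\Rightarrow f\in\mathcal{Y}_{\varphi}^{0}$ is the step I flag as the main obstacle. Again by contraposition, let $f\notin\mathcal{Y}_{\varphi}^{0}$: there are $z_{j},\xi_{j}$ with $\|z_{j}\|\to\infty$, $\|\xi_{j}\|=1$ along which a subsequence of $F_{j}$ tends to a constant $c$ locally uniformly on $\C$, so $\varphi(\|z_{j}\|)E_{M}(f(w),df(w)(\xi_{j}))\to0$ uniformly over $w=z_{j}+\varphi(\|z_{j}\|)\xi_{j}\zeta$, $|\zeta|\le R$, for each fixed $R$. One must now produce $\epsilon_{0}>0$ and $b_{n}$ with $\|b_{n}\|\to\infty$ along which the maximum in $\mathrm{(i)}$ over $D_{\varphi}(b_{n},\epsilon_{0})$ tends to $0$. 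The natural move is to pass to the several-variable rescalings $H_{j}(\eta)=f(z_{j}+\varphi(\|z_{j}\|)\eta)$ — normal on balls by Lemma~\ref{lem:1} — extract a limit $H\colon\C^{m}\to M$ (an unweighted Yosida mapping into the compact $M$), locate a ball on which $\|dH\|$ is uniformly small, and scale it back. \textbf{The trouble is that the constancy of $F_{j}$ forces $dH$ to vanish only along the single complex line $\C\xi_{\infty}$ with $\xi_{\infty}=\lim\xi_{j}$, not on a genuine ball}; and this gap is real, not cosmetic: for $m\ge2$ the mapping $(z_{1},z_{2})\mapsto\wp(z_{2})$ with $\wp$ the Weierstrass $\wp$-function lies in $\mathcal{Y}_{1}(\C^{2},P^{1}(\C))$ and is constant along every $z_{1}$-line (hence not of first category), yet $\max_{|z_{2}-a_{2}|<\epsilon}\wp^{\#}(z_{2})$ is bounded below by a positive constant as $\|a\|\to\infty$, so $\mathrm{(i)}$ would hold for it. Bridging this therefore appears to require an additional hypothesis, or a new idea, that excludes such ``partially constant'' behaviour and manufactures the quiet balls demanded by $\mathrm{(i)}$.

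Finally, $\mathrm{(i)}\Leftrightarrow\mathrm{(ii)}$ is a local comparison between the density $\Psi(z):=\sup_{\|\xi\|=1}E_{M}(f(z),df(z)(\xi))$ and the pulled-back form $f^{*}\omega_{M}$. In one direction $f^{*}\omega_{M}$ is bounded above by a constant multiple of $\Psi^{2}$ times a fixed Euclidean volume element, so a positive lower bound for $\Psi$ on $D_{\varphi}(a,\epsilon)$ integrates to one for $\int_{D_{\varphi}(a,\epsilon)}f^{*}\omega_{M}$. In the other direction, the coefficients of $df$ in local coordinates are holomorphic, so $\Psi^{2}$ is majorised by a plurisubharmonic function and a sub-mean-value inequality converts a positive lower bound for $\int_{D_{\varphi}(a,2\epsilon)}f^{*}\omega_{M}$ into one for $\max_{D_{\varphi}(a,\epsilon)}\Psi$. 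Throughout, $\varphi(\|z\|)/\varphi(\|a\|)\to1$ on $D_{\varphi}(a,\epsilon)$ lets one pass between the weights $\varphi(\|z\|)$ and $\varphi(\|a\|)$ with vanishing error.
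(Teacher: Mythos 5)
Your handling of the implication $f\in\mathcal{Y}_{\varphi}^{0}(\mathbb{C}^{m},M)\Rightarrow(\mathrm{i})$ is correct and is essentially the paper's own argument read contrapositively: the paper fixes sequences $\{z_j\},\{\xi_j\}$, passes to the rescaled family $F_j$, uses $\varphi(\|w_j\|)/\varphi(\|z_j\|)\to1$ together with normality, and identifies the limit of the maxima (resp.\ integrals) with $\max_{D(0,\epsilon)}E_M(h,dh)>0$ (resp.\ $\int h^{*}\omega_M>0$) for a non-constant subsequential limit $h$. The paper does not prove $(\mathrm{i})\Leftrightarrow(\mathrm{ii})$ by your direct pointwise comparison of $\sup_{\|\xi\|=1}E_M(f,df(\xi))$ with $f^{*}\omega_M$; it routes both conditions through the same normal-families limit, which avoids the sub-mean-value step you would need. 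That difference is cosmetic.

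The obstacle you flag in the direction $(\mathrm{i})\Rightarrow f\in\mathcal{Y}_{\varphi}^{0}$ is not a defect of your proposal: it is exactly the point at which the paper's own proof fails for $m\ge2$. The proof opens with the asserted identity
\begin{equation*}
\max_{z\in D_{\varphi}(z_j,\epsilon)}\varphi(\|z\|)E_{M}(f(z),df(z)(\xi_j))=\max_{\zeta\in D(0,\epsilon)}\frac{\varphi(\|z_j+\varphi(\|z_j\|)\xi_j\zeta\|)}{\varphi(\|z_j\|)}E_{M}(F_j(\zeta),dF_j(\zeta)(\xi_j)),
\end{equation*}
whose left-hand side ranges over the full $m$-dimensional ball $D_{\varphi}(z_j,\epsilon)$ (and, in condition (i), over all unit $\xi$), while the right-hand side sees only the one-dimensional slice $\{z_j+\varphi(\|z_j\|)\xi_j\zeta\}$ and the single direction $\xi_j$. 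For $m=1$ the two sides coincide and the argument reduces to Makhmutov's; for $m\ge2$ the inequality runs the wrong way for the converse: constancy of the limit of $F_j$ kills the right-hand side but gives no upper bound on the quantity appearing in (i). Your example $f(z_1,z_2)=\wp(z_2)\in\mathcal{Y}_{1}(\mathbb{C}^{2},P^{1}(\mathbb{C}))$ is a genuine counterexample to the stated equivalence: taking $\xi_j=(1,0)$ shows $f\notin\mathcal{Y}_{1}^{0}$, yet periodicity of $\wp^{\#}$ gives a uniform positive lower bound for the maximum in (i) over every ball $D(a,\epsilon)$, and likewise for any sensible reading of (ii) (which in any case needs reinterpretation for $m\ge2$, since $f^{*}\omega_M$ is a $(1,1)$-form being integrated over a $2m$-real-dimensional ball). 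So you should not expect to close this gap as the theorem stands; it holds for $m=1$, and for $m\ge2$ conditions (i) and (ii) would have to be reformulated over the slices $a+\varphi(\|a\|)\eta\,D(0,\epsilon)$ with an additional infimum over the unit directions $\eta$.
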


\begin{proof}
First note that for any $\epsilon>0,~f\in\mathcal{H}(\mathbb{C}^{m}, M)$ and sequences $\left\{z_j\right\},~\left\{\xi_j\right\}\subset\mathbb{C}^{m}$ with $\|\xi_j\|=1,$ $$\max\limits_{z\in D_{\varphi}(z_j, \epsilon)}\varphi(\|z\|)E_{M}(f(z), df(z)(\xi_j))=\max\limits_{\zeta\in D(0,\epsilon)}\frac{\varphi(\|z_j+\varphi(\|z_j\|)\xi_j\zeta\|)}{\varphi(\|z_j\|)}E_{M}(F_j(\zeta), dF_j(\zeta)(\xi_j))$$  and $$\int_{z\in D_{\varphi}(z_j, \epsilon)}f^*\omega_M(z)=\int_{\zeta\in D(0,\epsilon)}F^{*}_{j}\omega_M(\zeta),$$ where $F_j(\zeta)=f(z_j+\varphi(\|z_j\|\xi_j\zeta)).$ Now, suppose that $f\in\mathcal{Y}_{\varphi}^{0}(\mathbb{C}^{m}, M).$ Then by Proposition \ref{prop:1}, we find that the family $\left\{F_j(\zeta)\right\}$ is normal on $\mathbb{C}.$ Assume that $\left\{F_j(\zeta)\right\}$ converges locally uniformly on $\mathbb{C}$ to a non-constant mapping $h\in\mathcal{H}(\mathbb{C}, M)$ and $\xi_j\rightarrow \xi^*$ as $j\rightarrow\infty,$ where $\|\xi^*\|=1.$ Then 
\begin{align*}
& \lim\limits_{j\to\infty}\max\limits_{z\in D_{\varphi}(z_j, \epsilon)}\varphi(\|z\|)E_{M}(f(z), df(z)(\xi_j))\\
 &= \lim\limits_{j\to\infty}\max\limits_{\zeta\in D(0,\epsilon)}\frac{\varphi(\|z_j+\varphi(\|z_j\|)\xi_j\zeta\|)}{\varphi(\|z_j\|)}E_{M}(F_j(\zeta), dF_j(\zeta)(\xi_j))\\
&= \lim\limits_{j\to\infty}\max\limits_{\zeta\in D(0,\epsilon)}E_{M}(F_j(\zeta), dF_j(\zeta)(\xi_j))\\
&= \max\limits_{\zeta\in D(0,\epsilon)}E_{M}(h(\zeta), dh(\zeta)(\xi^*))>0.
\end{align*}
Also, $$\lim\limits_{j\to\infty}\int_{z\in D_{\varphi}(z_j, \epsilon)}f^{*}\omega_M(z)=\lim\limits_{j\to\infty}\int_{\zeta\in D(0, \epsilon)}F_j^{*}\omega_M(\zeta)=\int_{\zeta\in D(0, \epsilon)}h^{*}\omega_M(\zeta)>0.$$ This establishes the necessity of (i) and (ii).\\
Conversely, suppose that (i) or (ii) holds and $f\not\in\mathcal{Y}_{\varphi}^{0}(\mathbb{C}^{m}, M).$ Then there exist sequences $\{z_j\},~\{\xi_j\}\subset\mathbb{C}^{m}$ such that the family $\left\{F_j(z)=f(z_j+\varphi(\|z_j\|\xi_j\zeta)\right\}$ converges uniformly in a neighbourhood of the origin to a constant. Then 
\begin{align*}
& \lim\limits_{j\to\infty}\max\limits_{z\in D_{\varphi}(z_j, \epsilon)}\varphi(\|z\|)E_{M}(f(z), df(z)(\xi_j))\\
 &= \lim\limits_{j\to\infty}\max\limits_{\zeta\in D(0,\epsilon)}\frac{\varphi(\|z_j+\varphi(\|z_j\|)\xi_j\zeta\|)}{\varphi(\|z_j\|)}E_{M}(F_j(\zeta), dF_j(\zeta)(\xi_j))=0
\end{align*}
and $$\lim\limits_{j\to\infty}\int_{z\in D_{\varphi}(z_j, \epsilon)}f^{*}\omega_M(z)=\lim\limits_{j\to\infty}\int_{\zeta\in D(0, \epsilon)}F_j^{*}\omega_M(\zeta)=0.$$ This contradict both (i) and (ii), as desired.
\end{proof}

\begin{rem}
Theorem \ref{thm:1.1} for meromorphic functions and $p$-Yosida mappings can be found in \cite{makhmutov} and \cite{yang} respectively.
\end{rem}

\medskip

It is evident from Proposition \ref{prop:1} that a holomorphic mapping $f\notin\mathcal{Y}_{\varphi}(\mathbb{C}^{m}, M)$ if and only if there exist sequences $\left\{z_j\right\},~\left\{\xi_j\right\}\subset\mathbb{C}^{m}$ with $\|z_j\|\rightarrow\infty$ and $\|\xi_j\|=1,$ such that the family $$\mathcal{F}=\left\{F_j(\zeta):=f(z_j+\varphi(\|z_j\|)\xi_j\zeta),~j\in\mathbb{N}\right\}$$ is not normal on $\mathbb{C}.$ Thus, to check whether a holomorphic mapping belongs to the class $\mathcal{Y}_{\varphi}(\mathbb{C}^{m}, M),$ it is often convenient to suppose otherwise and check the non-normality of the family $\{F_j(\zeta)\}$ on $\mathbb{C}.$ This leads to the well known Zalcman's rescaling lemma \cite{zalcman} which is an essential tool to check the non-normality of a family of meromerophic functions on $\mathbb{C}.$ A great deal of focus has been given to find higher dimensional analogues of Zalcman's lemma (see \cite{aladro, chang, dovbush, thai}). Recently, Yang \cite{yang} established the Zalcman type rescaling for $p$-Yosida mappings of several complex variables. In the following, we obtain the Zalcman type rescaling result for weighted Yosida mappings:

\begin{thm}\label{thm:1} A holomorphic mapping $f\notin \mathcal{Y}_{\varphi}(\mathbb{C}^{m}, M)$ if and only if there exist sequences $\left\{z_j\right\},~\left\{\xi_j\right\}\subset\mathbb{C}^{m},~\left\{\rho_j\right\}\subset(0,1)$ with $\|z_j\|\rightarrow\infty,~\rho_j\rightarrow 0$ and $\|\xi_j\|=1$ such that the sequence $$g_j(\zeta):=f(z_j+\rho_j\varphi(\|z_j\|)\xi_j\zeta)$$ converges uniformly on compact subsets of $\mathbb{C}$ to a non-constant holomorphic mapping $g:\mathbb{C}\rightarrow M.$
\end{thm}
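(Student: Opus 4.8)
The plan is to reduce to Proposition~\ref{prop:1} and then apply a Zalcman-type rescaling argument to a single non-normal family on $\mathbb{C}$. First I would prove the ``if'' direction, which should be the easier half: suppose such sequences $\{z_j\},\{\xi_j\},\{\rho_j\}$ exist with $g_j(\zeta)=f(z_j+\rho_j\varphi(\|z_j\|)\xi_j\zeta)$ converging locally uniformly to a non-constant $g:\mathbb{C}\to M$. I want to show $f\notin\mathcal{Y}_\varphi(\mathbb{C}^m,M)$. Since $g$ is non-constant, there is a point $\zeta_0$ and a direction where $E_M(g(\zeta_0),dg(\zeta_0)(1))>0$; by local uniform convergence and the chain rule, $E_M(g_j(\zeta_0),dg_j(\zeta_0)(1))=\rho_j\varphi(\|z_j\|)E_M(f(w_j),df(w_j)(\xi_j))$ with $w_j=z_j+\rho_j\varphi(\|z_j\|)\xi_j\zeta_0$. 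Using $\rho_j\to 0$ together with the estimate $|\varphi(\|w_j\|)/\varphi(\|z_j\|)-1|<\gamma_r\rho_j R\to 0$ (exactly as in the proof of Proposition~\ref{prop:1}), one gets $\varphi(\|w_j\|)E_M(f(w_j),df(w_j)(\xi_j))=\rho_j^{-1}\cdot\frac{\varphi(\|w_j\|)}{\varphi(\|z_j\|)}\cdot E_M(g_j(\zeta_0),dg_j(\zeta_0)(1))\to\infty$ since the last factor tends to the positive constant $E_M(g(\zeta_0),dg(\zeta_0)(1))$ while $\rho_j^{-1}\to\infty$. Hence $\|w_j\|\to\infty$ gives a sequence along which $\varphi(\|w\|)E_M(f(w),df(w)(\xi))$ is unbounded, so $f\notin\mathcal{Y}_\varphi(\mathbb{C}^m,M)$.

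For the ``only if'' direction, suppose $f\notin\mathcal{Y}_\varphi(\mathbb{C}^m,M)$. By Proposition~\ref{prop:1} there exist $\{z_j\},\{\xi_j\}$ with $\|z_j\|\to\infty$, $\|\xi_j\|=1$, for which the family $\mathcal{F}=\{F_j(\zeta):=f(z_j+\varphi(\|z_j\|)\xi_j\zeta)\}$ is \emph{not} normal on $\mathbb{C}$. I would then invoke a Zalcman-type rescaling lemma for holomorphic mappings into the compact Hermitian manifold $M$ — in the form due to Aladro--Krantz / Thai--Trang--Huong (cf.\ the references \cite{aladro, thai}, and the $p$-Yosida version in \cite{yang}) — to the non-normal family $\mathcal{F}$: there exist points $\zeta_j$ in a fixed compact disk (passing to a subsequence, $\zeta_j\to\zeta_\infty$), positive reals $\sigma_j\to 0^+$, and unit vectors $\eta_j$, such that $h_j(\zeta):=F_j(\zeta_j+\sigma_j\eta_j\zeta)$ converges locally uniformly on $\mathbb{C}$ to a non-constant holomorphic $g:\mathbb{C}\to M$. (Here I use that $M$ compact forces the family to be, in the language of Lemma~\ref{lem:1}, not completely divergent, so the failure of normality is genuinely ``Zalcman-type''.) Unwinding the definition, $h_j(\zeta)=f\big(z_j+\varphi(\|z_j\|)\xi_j\zeta_j+\varphi(\|z_j\|)\sigma_j\xi_j\eta_j\zeta\big)$.

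Finally I would repackage this into the stated form. Set $z_j':=z_j+\varphi(\|z_j\|)\xi_j\zeta_j$ — the new centers — and $\rho_j:=\sigma_j$ (shrinking so that $\rho_j\in(0,1)$), and take the new unit direction $\xi_j'$ to be the (unit) complex scalar multiple $\xi_j\eta_j$; one needs $\varphi(\|z_j'\|)$ and $\varphi(\|z_j\|)$ to be comparable, which follows again from the Lipschitz-type bound $|\varphi(\|z_j'\|)-\varphi(\|z_j\|)|\le\gamma_{r}\,|\|z_j'\|-\|z_j\||\le\gamma_r\,\varphi(\|z_j\|)|\zeta_j|$ together with $\gamma_r\to 0$, so that $\varphi(\|z_j\|)\sigma_j = \rho_j'\varphi(\|z_j'\|)$ for a suitable $\rho_j'\to 0$. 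Also $\|z_j'\|\to\infty$ since $\|z_j'-z_j\|\le\varphi(\|z_j\|)|\zeta_j|$ is controlled (here I may need to know $\varphi$ grows no faster than linearly, which is exactly what $\gamma_r\le C\le 1$ guarantees via $\varphi(\|z_j\|)\le\varphi(0)+\|z_j\|$). Then $g_j(\zeta):=f(z_j'+\rho_j'\varphi(\|z_j'\|)\xi_j'\zeta)$ agrees with $h_j(\zeta)$ up to the harmless reparametrization and converges locally uniformly to the non-constant $g$, as required. The main obstacle is the invocation and bookkeeping of the several-variables Zalcman rescaling lemma for mappings into compact Hermitian manifolds — in particular verifying that the hypotheses (non-normality plus $M$ compact, hence the non-completely-divergent condition of Lemma~\ref{lem:1}) let us extract a non-constant entire limit $g:\mathbb{C}\to M$ rather than merely a non-constant limit on a disk — and then the careful choice of $z_j',\rho_j',\xi_j'$ so that the weight $\varphi$ is absorbed correctly, which is precisely where the growth constraints on $\varphi$ encoded by $\gamma_r$ are used.
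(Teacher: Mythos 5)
Your proof is essentially correct, but the forward direction takes a genuinely different route from the paper. The paper does not pass through Proposition~\ref{prop:1} and an external rescaling lemma; instead it runs the Zalcman/Landau extremal argument directly on $f$ in $\mathbb{C}^{m}$: starting from $A_j=\varphi(\|\tilde z_j\|)E_M(f(\tilde z_j),df(\tilde z_j)(\tilde\xi_j))\to\infty$, it maximizes $\left(t_j\varphi(\|\tilde z_j\|)-\|z-\tilde z_j\|\right)E_M(f(z),df(z)(\xi))$ over a shrinking $\varphi$-ball with $t_j=A_j^{-1/2}$, takes the extremal point as the new center $z_j$, sets $\rho_j=1/\bigl(\varphi(\|z_j\|)E_M(f(z_j),df(z_j)(\xi_j))\bigr)$, and verifies by hand that $E_M(g_j(\zeta),dg_j(\zeta)(1))\le 2$ on compacta while $E_M(g_j(0),dg_j(0)(1))=1$, so Lemma~\ref{lem:1} gives normality and the normalization forces a non-constant limit. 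Your version outsources exactly this step to an Aladro--Krantz/Brody-type rescaling lemma for the one-variable family $\{F_j\}$; that lemma is indeed available for targets that are compact Hermitian manifolds (and $M$ is assumed compact here, so the family is not completely divergent), so the argument goes through, but it is no cheaper than the paper's self-contained construction and requires the extra bookkeeping of re-centering at $z_j'=z_j+\varphi(\|z_j\|)\xi_j\zeta_j$ and renormalizing $\rho_j'=\sigma_j\varphi(\|z_j\|)/\varphi(\|z_j'\|)$, which you handle correctly. Your ``if'' direction is the paper's converse argument in contrapositive form and is fine.

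One small inaccuracy worth fixing: to get $\|z_j'\|\to\infty$ (and likewise $\|w_j\|\to\infty$ in the ``if'' direction) it is not enough that $\gamma_r\le C\le 1$, since the Zalcman points $\zeta_j$ may lie in a disk of radius $R>1$ and the bound $\varphi(\|z_j\|)\le\varphi(0)+\|z_j\|$ then only gives $\|z_j'\|\ge(1-R)\|z_j\|-O(1)$. What you actually need is $\varphi(t)=o(t)$ as $t\to\infty$, which follows from $\gamma_r\to 0$; with that, $\|z_j'-z_j\|\le R\,\varphi(\|z_j\|)=o(\|z_j\|)$ and the comparability $\varphi(\|z_j'\|)/\varphi(\|z_j\|)\to 1$ both follow. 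The paper uses the same fact implicitly in the proof of Proposition~\ref{prop:1}.
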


\begin{proof}
Suppose that $f\notin \mathcal{Y}_{\varphi}(\mathbb{C}^{m}, M).$ Then there exist sequences $\{\tilde{z_j}\},~\{\tilde{\xi_j}\}\subset\mathbb{C}^{m}$ with $\|\tilde{z_j}\|\rightarrow\infty$ and $\|\tilde{\xi_j}\|=1$ such that $$A_j:=\varphi(\|\tilde{z_j}\|)E_{M}(f(\tilde{z_j}), df(\tilde{z_j})(\tilde{\xi_j}))\rightarrow\infty \mbox{ as } j\rightarrow\infty.$$ Let $t_j=A_{j}^{-1/2},~D_{\varphi}(\tilde{z_j}, t_j)=\left\{z\in\mathbb{C}^{m}: \|z-\tilde{z_j}\|<t_j\varphi(\|\tilde{z_j}\|)\right\}$ and define $$R_j:=\sup\limits_{z\in D_{\varphi}(\tilde{z_j}, t_j),~\|\xi\|=1}\left(t_j\varphi(\|\tilde{z_j}\|)-\|z-\tilde{z_j}\|\right)E_{M}(f(z), df(z)(\xi)).$$ 
Since each function $\left(t_j\varphi(\|\tilde{z_j}\|)-\|z-\tilde{z_j}\|\right)E_{M}(f(z), df(z)(\xi))$ is continuous on $D_{\varphi}(\tilde{z_j}, t_j),$ we can find $z_j\in D_{\varphi}(\tilde{z_j}, t_j)$ and $\xi_j\in\mathbb{C}^{m}$ with $\|\xi_j\|=1$ such that $$\left(t_j\varphi(\|\tilde{z_j}\|)-\|z_j-\tilde{z_j}\|\right)E_{M}(f(z_j), df(z_j)(\xi_j))=R_j.$$ Clearly, $R_j\geq t_j\varphi(\|\tilde{z_j}\|)E_{M}(f(\tilde{z_j}), df(\tilde{z_j})(\tilde{\xi}))=t_{j}A_j=A_{j}^{1/2}\rightarrow\infty\mbox{ as } j\rightarrow\infty.$

Define $$\rho_j:=\frac{1}{\varphi(\|z_j\|)E_{M}(f(z_j), df(z_j)(\xi_j))}.$$ Then $\rho_j\rightarrow 0 \mbox{ as } j\rightarrow\infty.$\\ Let $$g_j(\zeta):=f(z_j+\rho_j\varphi(\|z_j\|)\xi_j\zeta),~|\zeta|<\frac{\left(t_j\varphi(\|\tilde{z_j}\|)-\|z_j-\tilde{z_j}\|\right)}{\rho_j\varphi(\|z_j\|)}=R_j.$$ Since $R_j\rightarrow\infty \mbox{ as } j\rightarrow\infty,$ $g_j(\zeta)$ is defined on whole of $\mathbb{C}.$ Now, let $R>0$ be fixed. Then for $|\zeta|\leq R,$ we have
\begin{align*}
E_{M}(g_j(\zeta), dg_j(\zeta)(1)) &= \rho_j\varphi(\|z_j\|)E_{M}(f(z_j+\rho_j\varphi(\|z_j\|)\xi_j\zeta), df(z_j+\rho_j\varphi(\|z_j\|)\xi_j\zeta))\\
&= \frac{E_{M}(f(z_j+\rho_j\varphi(\|z_j\|)\xi_j\zeta), df(z_j+\rho_j\varphi(\|z_j\|)\xi_j\zeta))}{E_M(f(z_j), df(z_j)(\xi_j))}\\
&\leq \frac{t_j\varphi(\|\tilde{z_j}\|)-\|z_j-\tilde{z_j}\|}{t_j\varphi(\|\tilde{z_j}\|)-\|z_j+\rho_j\varphi(\|z_j\|)\xi_j\zeta-\tilde{z_j}\|}\\
&\leq \frac{t_j\varphi(\|\tilde{z_j}\|)-\|z_j-\tilde{z_j}\|}{t_j\varphi(\|\tilde{z_j}\|)-\|z_j-\tilde{z_j}\|-\rho_j\varphi(\|z_j\|)R}\\
&= \frac{R_j\rho_j\varphi(\|z_j\|)}{R_j\rho_j\varphi(\|z_j\|)-\rho_j\varphi(\|z_j\|)R}\\
&= \frac{1}{1-R/R_j}\\
&< 2, \
\end{align*}
for sufficiently large $j.$ Thus, by Lemma \ref{lem:1}, the family $\{g_j(\zeta)\}$ is normal on $\mathbb{C}.$ Passing to a subsequence, if necessary, we can assume that $\{g_j(\zeta)\}$ converges locally uniformly on $\mathbb{C}$ to a holomorphic mapping $g: \mathbb{C}\rightarrow M.$ Clearly, $g$ is non-constant since $E_{M}(g_j(0), dg_j(0)(1))= \rho_j\varphi(\|z_j\|)E_{M}(f(z_j), df(z_j)(\xi_j))=1.$\\
Conversely, suppose that there exist sequences $\left\{z_j\right\},~\left\{\xi_j\right\}\subset\mathbb{C}^{m},~\left\{\rho_j\right\}\subset(0,1)$ with $\|z_j\|\rightarrow\infty,~\rho_j\rightarrow 0$ and $\|\xi_j\|=1$ such that the sequence $$g_j(\zeta):=f(z_j+\rho_j\varphi(\|z_j\|)\xi_j\zeta)$$ converges uniformly on compact subsets of $\mathbb{C}$ to a non-constant holomorphic mapping $g:\mathbb{C}\rightarrow M.$ We need to show that $f\not\in \mathcal{Y}_{\varphi}(\mathbb{C}^{m}, M).$ Suppose, on the contrary, that $f\in \mathcal{Y}_{\varphi}(\mathbb{C}^{m}, M).$ Then there exists a constant $K_0>0$ such that
\begin{equation}\label{eq:3}
\varphi(\|z\|)E_{M}(f(z), df(z)(\xi))\leq K_0
\end{equation}
 for $z,~\xi\in\mathbb{C}^{m}$ with $\|\xi_j\|=1.$\\
Fix $R>0$ and let $w_j:= z_j+\rho_j\varphi(\|z_j\|)\xi_j\zeta$ for $|\zeta|<R.$ Then from \eqref{eq:3}, we have
\begin{align*}
E_M(g_j(\zeta), dg_j(\zeta)(1)) &= \rho_j\varphi(\|z_j\|)E_M(f(w_j), df(w_j)(\xi_j))\\
&\leq\rho_j\cdot\frac{\varphi(\|z_j\|)}{\varphi(\|w_j\|)}\cdot K_0\\
&\rightarrow 0 \mbox{ as } j\rightarrow\infty,
\end{align*}
since $\varphi(\|w_j\|)/\varphi(\|z_j\|)\rightarrow 1$ as $j\rightarrow\infty.$\\
Thus, $E_M(g_j(\zeta), dg_j(\zeta)(1))=0$ for $|\zeta|<R.$ However, $R>0$ was arbitrary, so we must have $E_M(g_j(\zeta), dg_j(\zeta)(1))=0,~\forall~\zeta\in\mathbb{C},$ which contradicts the fact that $g$ is non-constant. This completes the proof.
\end{proof}

\section{\bf A five point theorem for weighted Yosida mappings}
In this section, we obtain some criteria of holomorphic mappings belonging to the class $\mathcal{Y}_{\varphi}(\mathbb{C}^{m}, P^{1}(\mathbb{C}))$ corresponding to the five point theorem of Lappan  \cite{lappan}. The Lappan's five point theorem states that a family of meromorphic functions on a domain $D\subset\mathbb{C}$ is normal on $D$ if and only there for each compact set $K\subset D,$ there exist a constant $M=M(K)>0$ and a set $E=E(K)\subset\mathbb{C}_{\infty}$ consisting of at least five distinct points such that $$f^{\#}(z)<M, ~z\in K\cap f^{-1}(E)$$ holds for every $f\in\mathcal{F}.$

In recent times, many higher dimensional analogues of the Lappan's result have been obtained (see \cite{charak, chen, datt, hahn, hu, tan}). Following the ideas of Lappan \cite{lappan}, Makhmutov \cite{makhmutov-2} proved a five point theorem for $p$-Yosida functions.
Recently, Yang \cite{yang} obtained a version of Lappan's five point theorem for $p$-Yosida mappings in several complex variables. The following result is an several complex variable analogue of the five point theorem for weighted Yosida mappings. Here, we assume that the reader is familiar with the standard notations and results of the Nevanlinna's value distribution theory of meromorphic functions. For details, we refer the reader to \cite{hayman}.

\begin{thm}\label{thm:2}
Let $f\in\mathcal{H}(\mathbb{C}^{m}, P^{1}(\mathbb{C}))$ and suppose that there exists a set $A\subset P^{1}(\mathbb{C})$ consisting of five distinct points such that $$\sup\limits_{z\in f^{-1}(A),~\|\xi\|=1}\varphi(\|z\|)E_{P^{1}(\mathbb{C})}(f(z), df(z)(\xi))<\infty.$$ Then $f\in\mathcal{Y}_{\varphi}(\mathbb{C}^{m}, P^{1}(\mathbb{C})).$
\end{thm}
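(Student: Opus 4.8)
The plan is to argue by contradiction using the Zalcman-type rescaling lemma (Theorem~\ref{thm:1}) together with the second main theorem of Nevanlinna theory. Suppose $f\notin\mathcal{Y}_{\varphi}(\mathbb{C}^{m}, P^{1}(\mathbb{C}))$. Then by Theorem~\ref{thm:1} there are sequences $\{z_j\},\{\xi_j\}\subset\mathbb{C}^{m}$ and $\{\rho_j\}\subset(0,1)$ with $\|z_j\|\to\infty$, $\rho_j\to 0$, $\|\xi_j\|=1$ such that $g_j(\zeta):=f(z_j+\rho_j\varphi(\|z_j\|)\xi_j\zeta)$ converges locally uniformly on $\mathbb{C}$ to a non-constant holomorphic map $g:\mathbb{C}\to P^{1}(\mathbb{C})$, i.e.\ a non-constant meromorphic function on $\mathbb{C}$. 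Since $g$ is non-constant and $A$ has five points, a meromorphic function on $\mathbb{C}$ can omit at most two values (Picard), so $g$ actually takes at least three values of $A$; in fact we will want to use all five via the defect relation. The key structural fact to extract is that $g$ has \emph{finite order} (indeed order at most $1$): this follows from the uniform bound $E_{M}(g_j(\zeta),dg_j(\zeta)(1))<2$ established in the proof of Theorem~\ref{thm:1}, which bounds the spherical derivative $g^{\#}$ of the limit $g$ by a constant on all of $\mathbb{C}$, hence $T(r,g)=O(r^{2})$ by integrating the Ahlfors–Shimizu characteristic. Actually a slightly finer point: we only get normality of $\{g_j\}$, so the bound $g^{\#}\le C$ holds on $\mathbb{C}$, giving $g$ of order $\le 2$ — which is all we need.

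Next I would transfer the hypothesis on $A$ from $f$ to $g$. By assumption there is a constant $L>0$ with $\varphi(\|z\|)E_{P^{1}(\mathbb{C})}(f(z),df(z)(\xi))\le L$ for all $z\in f^{-1}(A)$ and $\|\xi\|=1$. Fix $a\in A$ and a point $\zeta_0\in g^{-1}(a)$; for $j$ large, $g_j$ takes values near $a$ close to $\zeta_0$ (by Hurwitz, since $g\not\equiv a$), say at $\zeta_j\to\zeta_0$. The point $w_j:=z_j+\rho_j\varphi(\|z_j\|)\xi_j\zeta_j$ lies in $f^{-1}(\text{near }a)$; more carefully, one should first note that if $g$ hits $a$ then eventually $g_j$ hits points of $A$ — here I would instead directly use that $\zeta_j$ with $g_j(\zeta_j)\in A$ exist, or better, appeal to the argument in Makhmutov–Yang: at such $\zeta_j$,
\begin{align*}
E_{P^{1}(\mathbb{C})}(g_j(\zeta_j),dg_j(\zeta_j)(1)) &=\rho_j\varphi(\|z_j\|)E_{P^{1}(\mathbb{C})}(f(w_j),df(w_j)(\xi_j))\\
&=\rho_j\cdot\frac{\varphi(\|z_j\|)}{\varphi(\|w_j\|)}\cdot\varphi(\|w_j\|)E_{P^{1}(\mathbb{C})}(f(w_j),df(w_j)(\xi_j))\\
&\le \rho_j\cdot\frac{\varphi(\|z_j\|)}{\varphi(\|w_j\|)}\cdot L\longrightarrow 0,
\end{align*}
since $\varphi(\|w_j\|)/\varphi(\|z_j\|)\to 1$ (the Lipschitz/$\gamma_r\to 0$ argument from the proof of Proposition~\ref{prop:1}, using $\|w_j-z_j\|\le\rho_j\varphi(\|z_j\|)|\zeta_j|$). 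Passing to the limit, $E_{P^{1}(\mathbb{C})}(g(\zeta_0),dg(\zeta_0)(1))=0$, i.e.\ $g'(\zeta_0)=0$: every $a$-point of $g$, for every $a\in A$, is a multiple point (ramification point) of $g$.

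Now I invoke the second main theorem with ramification: for the five values $a_1,\dots,a_5\in A$,
\[
3\,T(r,g)\le \sum_{k=1}^{5}\overline{N}(r,a_k,g)-N_{\mathrm{ram}}(r,g)+S(r,g)\le \sum_{k=1}^{5}\tfrac{1}{2}N(r,a_k,g)+S(r,g)\le \tfrac{5}{2}T(r,g)+S(r,g),
\]
where the middle inequality uses that each $a_k$-point is multiple, so $\overline{N}(r,a_k,g)\le\tfrac12 N(r,a_k,g)$ and the extra multiplicity is absorbed into $-N_{\mathrm{ram}}$. Since $g$ has finite order, $S(r,g)=O(\log r)=o(T(r,g))$ (as $g$ is non-constant, $T(r,g)\to\infty$), and $3T(r,g)\le\tfrac52 T(r,g)+o(T(r,g))$ forces $T(r,g)\to$ contradiction. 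Hence $f\in\mathcal{Y}_{\varphi}(\mathbb{C}^{m},P^{1}(\mathbb{C}))$.

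The main obstacle I anticipate is the transfer step: one must produce, for each $a\in A$ that $g$ attains, a genuine sequence $\zeta_j$ with $g_j(\zeta_j)\in A$ (not merely near $a$) and control $w_j$, or else reformulate the hypothesis so that only ``$g^{\#}$ vanishes at $A$-points of $g$'' is needed — the standard fix (following Lappan and Makhmutov) is to note that if $g$ omits a value $a\in A$ near $\zeta_0$ this is handled by Hurwitz, while if $g(\zeta_0)=a$ then $g_j(\zeta)-a$ has zeros $\zeta_j\to\zeta_0$, and at those zeros $g_j(\zeta_j)=a\in A$ exactly, so the chain of inequalities applies verbatim. A secondary technical point is justifying $g^{\#}\le C$ on all of $\mathbb{C}$ and hence the finite-order bound; this is immediate from Lemma~\ref{lem:1} applied to the normal family $\{g_j\}$ on each disk, combined with the explicit bound $<2$ obtained in Theorem~\ref{thm:1}'s proof, giving a uniform constant independent of the disk.
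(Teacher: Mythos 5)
Your proposal is correct and follows essentially the same route as the paper: rescale via Theorem~\ref{thm:1} to obtain a non-constant limit $g$, use Hurwitz's theorem together with the hypothesis on $f^{-1}(A)$ to show every $a$-point of $g$ ($a\in A$) is multiple, and then apply the second fundamental theorem with the five values to reach $3T(r,g)\le\tfrac{5}{2}T(r,g)+S(r,g)$. Your additional remarks on the finite order of $g$ and the exactness of the Hurwitz step are refinements the paper leaves implicit, but they do not change the argument.
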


\begin{proof}
Assume that $f\notin\mathcal{Y}_{\varphi}(\mathbb{C}^{m}, P^{1}(\mathbb{C})).$ Then by Theorem \ref{thm:1}, there exist sequences $\{z_j\},~\{\xi_j\}\subset\mathbb{C}^{m}$ with $\|z_j\|\rightarrow\infty$ and $\|\xi\|=1,$ positive numbers $\rho_j\rightarrow 0$ such that the sequence $$g_j(\zeta):=f(z_j+\rho_j\varphi(\|z_j\|)\xi_j\zeta)$$ converges uniformly on compact subsets of $\mathbb{C}$ to a non-constant mapping $g\in\mathcal{H}(\mathbb{C}, P^{1}(\mathbb{C})).$\\
 Let $A:=\{a_1, a_2, a_3, a_4, a_5\}\subset P^{1}(\mathbb{C}).$\\
{\bf Claim:} For any $a\in A,$ all zeros of $g-a$ have multiplicity at least $2.$\\
For, let $\zeta_0\in\mathbb{C}$ be such that $g(\zeta_0)-a=0.$ Since $g\not\equiv a,$ by Hurwitz's theorem, there exists $\zeta_j\rightarrow\zeta_0$ such that $g_j(\zeta_j)-a=0.$ That is, $f(z_j+\rho_j\varphi(\|z_j\|)\xi_j\zeta_j)=a.$ By hypothesis, there exists a constant $K>0$ such that $$\varphi(\|w_j\|)E_{P^{1}(\mathbb{C})}(f(w_j), df(w_j)(\xi_j))<K,$$  where $w_j:=z_j+\rho_j\varphi(\|z_j\|)\xi_j\zeta_j.$ Thus, 
\begin{align*}
E_{P^{1}(\mathbb{C})}(g_j(\zeta_j), dg_j(\zeta_j)(1)) &=\rho_j\varphi(\|z_j\|)E_{P^{1}(\mathbb{C})}(f(w_j), df(w_j)(\xi_j))\\
&\leq \rho_j\cdot\frac{\varphi(\|z_j\|)}{\varphi(\|w_j\|)}\cdot K\\
&\rightarrow 0 \mbox{ as } j\rightarrow\infty.
\end{align*}
Hence, $E_{P^{1}(\mathbb{C})}(g_j(\zeta_0), dg_j(\zeta_0)(1))=\lim\limits_{j\to\infty}E_{P^{1}(\mathbb{C})}(g_j(\zeta_j), dg_j(\zeta_j)(1))=0,$ showing that $g'(\zeta_0)=0.$ Thus, $\zeta_0$ is a zeros of $g-a$ with multiplicity at least $2.$ This establishes the claim. Now, by the second fundamental theorem of Nevanlinna, we get
\begin{align*}
3T(r, g) &\leq \sum\limits_{i=1}^{5}\overline{N}\left(r, \frac{1}{g-a_i}\right) + S(r, g)\\
&\leq \frac{1}{2}\sum\limits_{i=1}^{5}N\left(r, \frac{1}{g-a_i}\right) + S(r, g)\\
&\leq \frac{5}{2}T(r, g) + S(r, g),
\end{align*}
a contradiction to the fact that $g$ is non-constant. Hence $f\in\mathcal{Y}_{\varphi}(\mathbb{C}^{m}, P^{1}(\mathbb{C})).$
\end{proof}

It is noteworthy to mention that the cardinality of the set $A$ in Theorem \ref{thm:2} can be reduced under suitable conditions. Tan and the present second author \cite{tan-thin} proved an analogue of Lappan's theorem for normal functions by taking a four point set in one complex variable. Subsequently, several versions of the Lappan's theorem for a four point set were proved by many authors in one as well as in higher dimensions (see \cite{charak, datt, hu, tan, yang}). Motivated by these works, we reduce the cardinality of the set $A$ in Theorem \ref{thm:2} and obtain the following result: 

\begin{thm}\label{thm:3}
Let $f\in\mathcal{H}(\mathbb{C}^{m}, P^{1}(\mathbb{C}))$ and suppose that there exists a set $A\subset P^{1}(\mathbb{C})$ consisting of four distinct points such that 
\begin{equation}
\sup\limits_{z\in f^{-1}(A),~\|\xi\|=1}\varphi(\|z\|)E_{P^{1}(\mathbb{C})}(f(z), df(z)(\xi))<\infty
\end{equation}
 and 
\begin{equation}\label{eq:4}
\sup\limits_{z\in f^{-1}(A\setminus\{\infty\}),~\|\xi\|=1}(\varphi(\|z\|))^{2}E_{P^{1}(\mathbb{C})}\left(\sum\limits_{i=1}^{m}f_{z_i}(z), d\sum\limits_{i=1}^{m}f_{z_i}(z)(\xi)\right)<\infty,
\end{equation}
 where $f_{z_i}=\partial{f}/\partial{z_i},~i=1,2,\ldots, m.$ Then $f\in\mathcal{Y}_{\varphi}(\mathbb{C}^{m}, P^{1}(\mathbb{C})).$
\end{thm}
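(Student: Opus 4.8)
The plan is to argue by contradiction, mimicking the proof of Theorem \ref{thm:2} but using the Zalcman rescaling of Theorem \ref{thm:1} to produce a nonconstant limit $g:\mathbb{C}\to P^1(\mathbb{C})$ together with extra information about the rescaled derivatives. Assume $f\notin\mathcal{Y}_\varphi(\mathbb{C}^m,P^1(\mathbb{C}))$; by Theorem \ref{thm:1} there are sequences $\{z_j\},\{\xi_j\}\subset\mathbb{C}^m$ with $\|z_j\|\to\infty$, $\|\xi_j\|=1$, and $\rho_j\to 0$ in $(0,1)$ such that $g_j(\zeta):=f(z_j+\rho_j\varphi(\|z_j\|)\xi_j\zeta)$ converges locally uniformly to a nonconstant $g\in\mathcal{H}(\mathbb{C},P^1(\mathbb{C}))$. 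Passing to a subsequence we may also assume $\xi_j\to\xi^*$ with $\|\xi^*\|=1$. Write $A=\{a_1,a_2,a_3,a_4\}$. Exactly as in Theorem \ref{thm:2}, the first hypothesis forces every zero of $g-a_i$ to have multiplicity at least $2$ for each $i$: if $g(\zeta_0)=a_i$, Hurwitz gives $\zeta_j\to\zeta_0$ with $g_j(\zeta_j)=a_i$, hence $f(w_j)=a_i$ with $w_j=z_j+\rho_j\varphi(\|z_j\|)\xi_j\zeta_j$, so $E_{P^1(\mathbb{C})}(g_j(\zeta_j),dg_j(\zeta_j)(1))=\rho_j\tfrac{\varphi(\|z_j\|)}{\varphi(\|w_j\|)}\varphi(\|w_j\|)E_{P^1(\mathbb{C})}(f(w_j),df(w_j)(\xi_j))\to 0$ using $\varphi(\|w_j\|)/\varphi(\|z_j\|)\to 1$, giving $g'(\zeta_0)=0$.

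The new ingredient is to squeeze one more unit of defect out of the finite points using \eqref{eq:4}. The idea is that at a point $\zeta_0$ with $g(\zeta_0)=a_i\neq\infty$, hypothesis \eqref{eq:4} bounds a ``second-derivative'' quantity of $f$, which after rescaling should show that such zeros of $g-a_i$ in fact have multiplicity at least $3$. Concretely, observe that the directional derivative along $\xi_j$ of $g_j$ is $g_j'(\zeta)=\rho_j\varphi(\|z_j\|)\sum_{i=1}^m f_{z_i}(w_j(\zeta))(\xi_j)_i$ where $w_j(\zeta)=z_j+\rho_j\varphi(\|z_j\|)\xi_j\zeta$; iterating, $g_j''(\zeta)$ involves $(\rho_j\varphi(\|z_j\|))^2$ times a derivative of $\sum_i f_{z_i}$ evaluated at $w_j(\zeta)$. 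At a point $\zeta_j\to\zeta_0$ where $g_j(\zeta_j)=a_i$ (so $w_j(\zeta_j)\in f^{-1}(A\setminus\{\infty\})$ eventually, since $a_i\neq\infty$ and the values are close), \eqref{eq:4} together with $(\varphi(\|w_j\|)/\varphi(\|z_j\|))^2\to 1$ and $\rho_j\to 0$ forces the rescaled second-derivative quantity $E_{P^1(\mathbb{C})}(g_j''(\zeta_j),\ldots)\to 0$, hence $g''(\zeta_0)=0$; combined with $g'(\zeta_0)=0$ from the paragraph above, every zero of $g-a_i$ for the \emph{finite} $a_i$ has multiplicity $\geq 3$. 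At most one point of $A$ is $\infty$, so at least three of the $a_i$ are finite.

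With these multiplicity bounds the second fundamental theorem finishes it. Let $A'=A\cap\mathbb{C}$, so $|A'|\geq 3$, and let $a_i$ range over all four points. Then
\begin{align*}
2T(r,g)&\leq \sum_{i=1}^{4}\overline{N}\Bigl(r,\tfrac{1}{g-a_i}\Bigr)+S(r,g)\\
&\leq \sum_{a_i\in A'}\tfrac{1}{3}N\Bigl(r,\tfrac{1}{g-a_i}\Bigr)+\sum_{a_i\notin A'}\tfrac{1}{2}N\Bigl(r,\tfrac{1}{g-a_i}\Bigr)+S(r,g)\\
&\leq \Bigl(\tfrac{3}{3}+\tfrac{1}{2}\Bigr)T(r,g)+S(r,g)=\tfrac{3}{2}T(r,g)+S(r,g),
\end{align*}
which is impossible for nonconstant $g$. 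Hence $f\in\mathcal{Y}_\varphi(\mathbb{C}^m,P^1(\mathbb{C}))$.

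I expect the main obstacle to be making the second-derivative rescaling argument rigorous: one must carefully identify the chain-rule expression for $g_j''$, check that the hypothesis \eqref{eq:4} (phrased via $E_{P^1(\mathbb{C})}$ applied to the holomorphic map $z\mapsto\sum_i f_{z_i}(z)$) really controls exactly this quantity up to the factor $(\rho_j\varphi(\|z_j\|))^2$, and verify that $w_j(\zeta_j)$ lands in $f^{-1}(A\setminus\{\infty\})$ for large $j$ rather than merely near it — the latter is automatic here since $g_j(\zeta_j)=a_i$ exactly, by Hurwitz. One also needs the Fubini–Study metric estimates near a finite point $a_i$ to translate ``$E_{P^1(\mathbb{C})}$ of the second-derivative map $\to 0$'' into ``$g''(\zeta_0)=0$'', which is routine since away from $\infty$ the metric is comparable to the Euclidean one on compact sets.
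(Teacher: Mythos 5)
Your proposal follows essentially the same route as the paper's own proof: Zalcman rescaling via Theorem \ref{thm:1}, the multiplicity-$2$ argument from Theorem \ref{thm:2} for all four points, the use of hypothesis \eqref{eq:4} to upgrade zeros of $g-a$ at finite $a$ to multiplicity $\geq 3$ by showing the rescaled quantity $E_{P^1(\mathbb{C})}(g_j'(\zeta_j'),dg_j'(\zeta_j')(1))$ (which carries the factor $\rho_j^2\varphi(\|z_j\|)^2$) tends to $0$, and the identical second fundamental theorem count $2T(r,g)\leq\bigl(\tfrac{3}{3}+\tfrac{1}{2}\bigr)T(r,g)+S(r,g)$. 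The details you flag as needing care (the chain-rule identification of $g_j''$ and the exact matching with \eqref{eq:4}) are precisely the computation the paper carries out, so there is no substantive divergence.
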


\begin{proof}
Suppose that $f\notin\mathcal{Y}_{\varphi}(\mathbb{C}^{m}, P^{1}(\mathbb{C})).$ Then by Theorem \ref{thm:1}, there exist sequences $\{z_j\},~\{\xi_j\}\subset\mathbb{C}^{m}$ with $\|z_j\|\rightarrow\infty$ and $\|\xi_j\|=1,$ positive numbers $\rho_j\rightarrow 0$ such that the sequence $$g_j(\zeta):=f(z_j+\rho_j\varphi(\|z_j\|)\xi_j\zeta)$$ converges uniformly on compact subsets of $\mathbb{C}$ to a non-constant mapping $g\in\mathcal{H}(\mathbb{C}, P^{1}(\mathbb{C})).$ Then by applying the same arguments as in the proof of Theorem \ref{thm:2}, one can easily see that for any $a\in S,$ all zeros of $g-a$ have multiplicity at least $2.$ Next, we claim that for any $a\in S\setminus\{\infty\},$ all zeros of $g-a$ have multiplicity at least $3.$ So, let $\zeta_0'\in\mathbb{C}$ such that $g(\zeta_0')-a=0.$ Since $g\not\equiv a,$ by Hurwitz's theorem, there exists $\zeta_j'\rightarrow\zeta_0'$ such that $f(w_j')=a,$ where $w_j':=z_j+\rho_j\varphi(\|z_j\|)\xi_j\zeta_j'.$ Now, by \eqref{eq:4}, there exists a constant $K'>0$ such that 
\begin{equation}\label{eq:5}
(\varphi(\|w_j'\|))^{2}E_{P^{1}(\mathbb{C})}\left(\sum\limits_{i=1}^{m}f_{z_i}(w_j'), d\sum\limits_{i=1}^{m}f_{z_i}(w_j')(\xi_j)\right)\leq K'.
\end{equation}
Then for $j\in\mathbb{N}$ and $\xi_j=(\xi_{j1},\ldots, \xi_{jm}),$ we have $$g'_{j}(\zeta)=\rho_j\varphi(\|z_j\|)\sum\limits_{i=1}^{m}\xi_{ji}f_{z_i}(z_j+\rho_j\varphi(\|z_j\|)\xi_j\zeta).$$

A simple computation together with \eqref{eq:5} shows that
\begin{align*}
E_{P^{1}(\mathbb{C})}(g'{_j}(\zeta_{j}'), dg'_{j}(\zeta_{j}')(1)) &=\rho_{j}^{2}(\varphi(\|z_j\|)^{2}\sum\limits_{i=1}^{m}|\xi_{ji}|E_{P^{1}(\mathbb{C})}(f_{z_i}(w_j'), df_{z_i}(w_j')(\xi_j))\\
&\leq \rho_{j}^{2}(\varphi(\|z_j\|)^{2}E_{P^{1}(\mathbb{C})}\left(\sum\limits_{i=1}^{m}f_{z_i}(w_j'), d\sum\limits_{i=1}^{m}f_{z_i}(w_j')(\xi_j)\right)\\
&\leq \rho_{j}^{2}\cdot\frac{(\varphi(\|z_j\|))^{2}}{(\varphi(\|w_j'\|))^{2}}\cdot K'\\
&\rightarrow 0 \mbox{ as } j\rightarrow\infty.
\end{align*}
Hence, $E_{P^{1}(\mathbb{C})}(g'_j(\zeta_0'), dg'_j(\zeta_0')(1))=\lim\limits_{j\to\infty}E_{P^{1}(\mathbb{C})}(g'_j(\zeta_j'), dg'_j(\zeta_j')(1))=0,$ showing that $g''(\zeta_0')=0.$ Thus, for any $a\in A\setminus\{\infty\},$ all zeros of $g-a$ have multiplicity at least $3.$ Let $A=\{a_1, a_2, a_3, a_4\}\subset P^{1}(\mathbb{C}),$ where $a_4$ is either finite or infinite. Appealing to the second fundamental theorem of Nevanlinna, we obtain 
\begin{align*}
2T(r, g) &\leq \sum\limits_{i=1}^{4}\overline{N}\left(r, \frac{1}{g-a_i}\right) + S(r, g)\\
&\leq \frac{1}{3}\sum\limits_{i=1}^{3}N\left(r, \frac{1}{g-a_i}\right) + \frac{1}{2}N\left(r, \frac{1}{g-a_4}\right) + S(r, g)\\
&\leq \frac{3}{2}T(r, g) + S(r, g),
\end{align*}
a contradiction to the fact that $g$ is non-constant. Hence $f\in\mathcal{Y}_{\varphi}(\mathbb{C}^{m}, P^{1}(\mathbb{C})).$
\end{proof}

\section{\bf Weighted Yosida mappings concerning hypersurfaces in $P^{n}(\mathbb{C})$}
In this section, we prove some criteria of holomorphic mappings intersecting with moving hypersurfaces in $P^{n}(\mathbb{C})$ to be classified as weighted Yosida mappings. Before stating our results, we pause to give a some basic notions and preliminary results.

\medskip

Let $D$ be a nonempty open connected set in $\mathbb{C}^{m}$ and let $f\in\mathcal{H}(D, P^{n}(\mathbb{C})).$  A holomorphic mapping $\hat{f}=(f_0,\ldots, f_n): D\rightarrow\mathbb{C}^{n+1}$ is said to be a representation of $f$ on $D$ if $\pi(\hat{f}(z))=f(z)$ for all $z\in D,$ where $\pi:\mathbb{C}^{n+1}\setminus\{0\}\rightarrow P^{n}(\mathbb{C})$ is the standard quotient mapping. Further, if the set $I(f):=\{z\in D: f_0(z)=\cdots=f_n(z)=0\}$ is empty, then we say that $\hat{f}$ is a reduced representation of $f$ on $D.$ It is worthwhile to mention that for any $a\in D,$ $f$ always has a reduced representation in a neighbourhood of $a.$

\smallskip

Denote by $H_{D},$ the ring of all holomorphic functions on $D$ and denote by $H_D[x_0,\ldots, x_n],$ the set of all homogeneous polynomials in variable $x=(x_0,\ldots, x_n)\in\mathbb{C}^{n+1}$ over $H_D.$ Let $Q\in H_D[x_0,\ldots, x_n]\setminus\{0\}$ be a homogeneous polynomials of degree $d$ and assume that the coefficients of $Q$ have no common zeros. Then we can write $$Q(x)=Q(x_0,\ldots, x_n)=\sum\limits_{j=0}^{n_d}a_jx^{I_j}=\sum\limits_{j=0}^{n_d}a_jx_{0}^{i_{j0}}\cdots x_n^{i_{jn}},$$
where $a_j\in H_D,~I_j=(i_{j0},\ldots, i_{jn})$ with $|I_j|=i_{j0}+\cdots+i_{jn}=d$ for $j=0,\ldots, n_d$ and $n_d=\binom{n+d}{n}-1.$ Thus, for each $z\in D,$ there is a homogeneous polynomials $Q(z)$ over $\mathbb{C}^{n+1}$ defined by $$Q(z)(x)=\sum\limits_{j=0}^{n_d}a_j(z)x^{I_j}$$ and hence, to each $z\in D,$ there is an associated hypersurface $S$ given by $$S(z)=\{x\in\mathbb{C}^{n+1}: Q(z)(x)=0\}.$$ We call this hypersurface a {\it moving hypersurface} in $P^{n}(\mathbb{C})$ defined by the homogeneous polynomial $Q.$ Since the coefficients of $Q$ have no common zeros, the hypersurface $S(z)$ is well defined for each $z\in D.$ 

\begin{definition}(see \cite{chen, dethloff})
Let $S_j,~j=1,\ldots, q$ be $q~(\geq n+1)$ moving hypersurfaces in $P^{n}(\mathbb{C})$ defined by the homogeneous polynomials $Q_j\in H_D[x_0,\ldots, x_n].$ Then we say that these moving hypersurfaces $S_j$ are located in (weakly) general position if there exists $z\in D$ such that, for any $1\leq j_0<\cdots <j_n\leq q,$ the system of equations 
$$\left\{\begin{array}{cc} Q_{j_i}(z)(x_0,\ldots, x_n)=0,\\ 0\leq i\leq n\end{array}\right.$$ has only the trivial solution $x=(0,\ldots, 0)\in\mathbb{C}^{n+1}.$ This is equivalent to $$S(Q_1,\ldots, Q_q)(z)=\prod\limits_{1\leq j_0<\cdots <j_n\leq q}\inf\limits_{\|x\|=1}(|Q_{j_0}(z)(x)|^2+\cdots+|Q_{j_n}(z)(x)|^2)>0,$$ where $x=(x_0,\ldots, x_n),~Q_j(z)(x)=\sum\limits_{|I|=d_j}a_{j_I}x^{I}$and $\|x\|=(\sum\limits_{j=0}^{n}|x_j|^2)^{1/2}.$
\end{definition}

\medskip

Let $\mathcal{S}=\{S_j\}_{j=1}^{q}$ be a collection of moving hypersurfaces defined by the homogeneous polynomials $Q_j\in H_D[x_0,\ldots, x_n]$ with $\deg{Q_j}=d_j.$ Let $l_{\mathcal{S}}=\lcm(d_1,\ldots, d_q)$ and set $n_{\mathcal{S}}=\binom{n+l_{\mathcal{S}}}{n}-1.$

Recall that for $I=(i_0,\ldots, i_n),~J=(j_0,\ldots, j_n)\in\mathbb{N}^{n+1},$ we say that $I<J$ if and only if there exists $k\in\{0,\ldots, n\}$ such that $i_{t}=j_t$ for $t<k$ and $i_k<j_k.$ Now, for $l=0,\ldots, n_{\mathcal{S}},$ let $$\left\{\begin{array}{ccc}I_0 =\{i_{0_0},\ldots, i_{0_n}\},\\
I_1 =\{i_{1_0},\ldots, i_{1_n}\},\\
 \vdots\\
I_{n_{\mathcal{S}}} =\{i_{n_{\mathcal{S}_{0}}},\ldots, i_{n_{\mathcal{S}_{n}}}\}\end{array}\right\}\subset\mathbb{N}^{n+1}$$ with $|I_l|=\sum_{k=0}^{n}i_{l_{k}}=l_{\mathcal{S}}$ and $I_s<I_t$ for $s<t.$\\
Let $[x_0:\cdots: x_n]$ and $[w_0:\cdots: w_{n_{\mathcal{S}}}]$ be the homogeneous coordinates in $P^{n}(\mathbb{C})$ and $P^{n_{\mathcal{S}}}(\mathbb{C})$ respectively. Then the map $\nu_{l_{\mathcal{S}}}: P^{n}(\mathbb{C})\rightarrow P^{n_{\mathcal{S}}}(\mathbb{C})$ given by $$\nu_{l_{\mathcal{S}}}(x)=(w_0(x):\cdots: w_{n_{\mathcal{S}}}(x)), \mbox{ where } w_l(x):=x^{I_l}=x_0^{i_{l_{0}}}x_1^{i_{l_{1}}}\cdots x_n^{i_{l_{n}}},~ l=0,\ldots, n_{\mathcal{S}},$$ is called the {\it Veronese embedding} of degree $l_{\mathcal{S}}.$ 

\smallskip

For any $S_j\in\mathcal{S},$ we set $Q_j^*=Q_j^{l_{\mathcal{S}}/d_j}$ and let $b_j=(b_{j_0},\ldots, b_{j_{n_{\mathcal{S}}}})$ be the vector associated with $Q_j^*.$ Then $$L_j=Q_j^*=b_{j_0}w_0+\cdots b_{j_{n_{\mathcal{S}}}}w_{n_{\mathcal{S}}}$$ is a moving linear form in $P^{n_{\mathcal{S}}}(\mathbb{C}).$ \\
Let $H_j$ be a hyperplane in $P^{n_{\mathcal{S}}}(\mathbb{C})$ defined by the linear form $L_j.$ Then for the collection $\mathcal{S}=\{S_j\}_{j=1}^{q}$ of moving hypersurfaces in $P^{n}(\mathbb{C}),$ there is a corresponding collection $\mathcal{H}=\{H_j\}_{j=1}^{q}$ of associated moving hyperplanes in $P^{n_{\mathcal{S}}}(\mathbb{C}).$

\begin{definition}
let $q\geq n_{\mathcal{S}}+1.$ Then the collection $\mathcal{S}=\{S_j\}_{j=1}^{q}$ of moving hypersurfaces is said to be located in (weakly) general position for Veronese embedding in $P^{n}(\mathbb{C})$ if there exists $z\in D$ such that the corresponding collection $\mathcal{H}=\{H_j(z)\}_{j=1}^{q}$ of associated moving hyperplanes are in general position in $P^{n_{\mathcal{S}}}(\mathbb{C})$ at $z.$
\end{definition} 

In addition to the above notions, we need some results from the Nevanlinna theory in several complex variables (see \cite{noguchi}). The following result established by Nochka \cite{nochka} is a Picard type theorem for holomorphic curves: 

\begin{lem}\cite{nochka}\label{lem:nochka}
Let $f\in\mathcal{H}(\mathbb{C}, P^{n}(\mathbb{C})),$ $H_1,\ldots, H_q,~q\geq 2n+1$ be $q$ hyperplanes in $P^{n}(\mathbb{C})$ in general position and let $l_i\in\mathbb{N}\cup\{\infty\}~i=1,\ldots,q$ such that $$\sum\limits_{i=1}^{q}\frac{1}{l_i}<\frac{q-n-1}{n}.$$ If $f$ intersects $H_i$ with multiplicity at least $l_i,$ then $f$ reduces to a constant. 
\end{lem}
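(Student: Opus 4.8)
The plan is to prove the statement of Lemma \ref{lem:nochka} by reducing it to the classical second main theorem for holomorphic curves intersecting hyperplanes in general position, together with Nochka's weights. First I would recall the Nevanlinna--Nochka inequality: for a holomorphic curve $f:\mathbb{C}\to P^{n}(\mathbb{C})$ whose image is not contained in any of the hyperplanes $H_1,\dots,H_q$ ($q\geq 2n+1$) in general position, one has the defect relation
\begin{equation*}
(q-2n+n-1)\,T_f(r)\leq \sum_{i=1}^{q}\overline{N}^{[n]}\!\left(r,\tfrac{1}{(f,H_i)}\right)+S_f(r),
\end{equation*}
where $\overline{N}^{[n]}$ is the truncated-to-level-$n$ counting function and $S_f(r)=o(T_f(r))$ outside a set of finite measure. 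This is the higher-codimension substitute for the ordinary second main theorem and is exactly what the Nochka weight machinery delivers.

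Next I would suppose, for contradiction, that $f$ is non-constant; since the $H_i$ are in general position and $f$ meets each $H_i$ only with multiplicity $\geq l_i$, the image of $f$ lies in none of the $H_i$ (if it did, general position would force $f$ to be constant after projecting, or would contradict meeting another $H_j$ properly), so the Nochka inequality above applies. The multiplicity hypothesis gives the pointwise bound
\begin{equation*}
\overline{N}^{[n]}\!\left(r,\tfrac{1}{(f,H_i)}\right)\leq \frac{n}{l_i}\,N\!\left(r,\tfrac{1}{(f,H_i)}\right)\leq \frac{n}{l_i}\,T_f(r)+O(1),
\end{equation*}
using that each zero of $(f,H_i)$ is counted at most $\min(n,\text{mult})\leq n$ times in the truncated function while its true multiplicity is at least $l_i$, and then the first main theorem $N(r,1/(f,H_i))\leq T_f(r)+O(1)$.

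Substituting these estimates into the Nochka inequality yields
\begin{equation*}
(q-n-1)\,T_f(r)\leq \sum_{i=1}^{q}\frac{n}{l_i}\,T_f(r)+S_f(r)=n\left(\sum_{i=1}^{q}\frac{1}{l_i}\right)T_f(r)+S_f(r).
\end{equation*}
Dividing by $T_f(r)$ and letting $r\to\infty$ along an appropriate sequence avoiding the exceptional set gives $q-n-1\leq n\sum_{i=1}^q 1/l_i$, i.e. $\sum_{i=1}^q 1/l_i\geq (q-n-1)/n$, which contradicts the hypothesis $\sum_{i=1}^q 1/l_i<(q-n-1)/n$. Hence $f$ must be constant. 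The main obstacle is making the first step precise: establishing (or correctly citing) the Nochka defect relation with the right truncation level $n$ and correctly handling the degenerate case where $f$'s image could lie in a coordinate subspace spanned by some of the $H_i$ — this is where Nochka's weights $\omega_i$ and the Nochka constant $\theta$ must be invoked, and the bookkeeping of general position under restriction to linear subspaces is the delicate part; everything after that is the routine Nevanlinna estimate sketched above.
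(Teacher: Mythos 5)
First, a point of context: the paper offers no proof of Lemma \ref{lem:nochka} at all --- it is imported as a black box from Nochka's paper \cite{nochka} --- so there is no in-paper argument to compare yours against, and any proof you supply is necessarily ``a different route.'' Judged on its own terms, the second half of your sketch is fine: the truncation estimate $N^{[n]}(r,1/(f,H_i))\leq (n/l_i)\,N(r,1/(f,H_i))\leq (n/l_i)\,T_f(r)+O(1)$ is correct under the multiplicity hypothesis, and the concluding defect computation is routine.

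The genuine gap is in the first step, and it is not merely a matter of ``making the citation precise'' as you suggest --- it is where the entire content of the lemma lives. The inequality you display, $(q-2n+n-1)T_f(r)\leq\sum_i N^{[n]}(r,1/(f,H_i))+S_f(r)$, simplifies to $(q-n-1)T_f(r)\leq\cdots$, which is Cartan's second main theorem for \emph{linearly non-degenerate} curves; it is false in general when the image of $f$ spans a proper linear subspace, and ``the image is not contained in any $H_i$'' does not rescue it, since non-degeneracy relative to the given hyperplanes is much weaker than linear non-degeneracy. (Your parenthetical reason for excluding $f(\mathbb{C})\subset H_i$ is also wrong as stated: a non-constant curve can lie in up to $n$ of $q$ hyperplanes in general position in $P^{n}(\mathbb{C})$ without any contradiction.) What Nochka's weights actually deliver is: if the image of $f$ spans a $k$-dimensional subspace, $1\leq k\leq n$, and $q\geq 2n-k+1$, then $(q-2n+k-1)T_f(r)\leq\sum_i N^{[k]}(r,1/(f,H_i))+S_f(r)$. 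Running your truncation estimate with $k$ in place of $n$ yields $\sum_i 1/l_i\geq (q-2n+k-1)/k$, so to conclude you must observe that $(q-2n+k-1)/k=1+(q-2n-1)/k$ is non-increasing in $k$ precisely because $q\geq 2n+1$ forces $q-2n-1\geq 0$, hence is minimized at $k=n$ with value $(q-n-1)/n$; only then does the hypothesis $\sum_i 1/l_i<(q-n-1)/n$ contradict \emph{every} possible degeneracy dimension $k$. This reduction over $k$, plus a correct treatment of the hyperplanes that may contain the image, is the missing core of the argument; everything you wrote after it is, as you say, routine.
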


We also need the following Picard type theorem for moving hypersurfaces in $P^{n}(\mathbb{C}):$
 
\begin{lem}\cite{hu}\label{lem:hu-thin}
Let $f:\mathbb{C}^{m}\rightarrow P^{n}(\mathbb{C})$ be a meromorphic mapping with reduced representation $\tilde{f}.$ Let $q\in\mathbb{N},~l_i\in\mathbb{N}\cup\{\infty\},~i=1,\ldots, q$ and suppose that there exist $q$ homogeneous polynomials $Q_i\in\mathbb{C}[x_0,\ldots, x_n]$ with $\deg Q_i=d_i,~i=1,\ldots, q$ and $Q_i(\tilde{f})\not\equiv 0$ such that the hypersurfaces $S_i$ associated to $Q_i$ are in weakly general position. Assume that $q\geq nn_{\mathcal{S}}+n+1$ and $$\sum\limits_{i=1}^{q}\frac{1}{l_i}<\frac{q-(n-1)(n_{\mathcal{S}}+1)}{n_S(n_{\mathcal{S}}+2)}, \mbox{ where } n_{\mathcal{S}}=\binom{n+l_{\mathcal{S}}}{n}-1 \mbox{ and } l_{\mathcal{S}}=\lcm(d_1,\ldots,d_q).$$ If $f$ intersects $S_i$ with multiplicity at least $l_i,$ then $f$ must be a constant.
\end{lem}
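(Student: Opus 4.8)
\medskip
\noindent\emph{Proof strategy.} The statement is a Picard-type theorem, so the plan is to argue by contradiction via a second main theorem, after reducing the moving hypersurfaces to moving hyperplanes. Suppose $f$ is non-constant. First I would compose $f$ with the Veronese embedding $\nu=\nu_{l_{\mathcal{S}}}\colon P^{n}(\mathbb{C})\to P^{n_{\mathcal{S}}}(\mathbb{C})$ described above, obtaining the holomorphic mapping $F:=\nu\circ f\colon\mathbb{C}^{m}\to P^{n_{\mathcal{S}}}(\mathbb{C})$. If $\tilde f=(f_{0},\dots,f_{n})$ is a reduced representation of $f$, then $\tilde F=(\tilde f^{\,I_{0}},\dots,\tilde f^{\,I_{n_{\mathcal{S}}}})$ is a reduced representation of $F$: each pure power $x_{k}^{\,l_{\mathcal{S}}}$ occurs among the monomials $x^{I_{l}}$, so the $\tilde f^{\,I_{l}}$ cannot all vanish at a point where some $\tilde f_{k}\neq 0$. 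Hence $F$ is a well-defined holomorphic mapping, non-constant because $\nu$ is an embedding, with $T_{F}(r)=l_{\mathcal{S}}\,T_{f}(r)+O(1)$. Under $\nu$ each moving hypersurface $S_{i}$ becomes the moving hyperplane $H_{i}$ of $P^{n_{\mathcal{S}}}(\mathbb{C})$ cut out by the linear form $L_{i}=Q_{i}^{\,l_{\mathcal{S}}/d_{i}}$; by hypothesis the family $\{H_{i}\}_{i=1}^{q}$ is in (weakly) general position and $q\ge nn_{\mathcal{S}}+n+1$. Moreover, if $f$ meets $S_{i}$ with multiplicity $\mu$ at a point, then $L_{i}(\tilde F)=Q_{i}(\tilde f)^{\,l_{\mathcal{S}}/d_{i}}$ vanishes there to order $\mu\,l_{\mathcal{S}}/d_{i}\ge\mu\ge l_{i}$ (recall $l_{\mathcal{S}}=\lcm(d_{1},\dots,d_{q})\ge d_{i}$), so $F$ meets $H_{i}$ with multiplicity at least $l_{i}$ as well.

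The second step, which I expect to be the main obstacle, is to run a second main theorem for $F$ against $H_{1},\dots,H_{q}$. I would work with several-variable Nevanlinna theory directly, or equivalently restrict $f$ to a generic complex line on which it stays non-constant while the general-position property of $\{H_{i}\}$ persists at some point. In the settings in which Lemma \ref{lem:hu-thin} is applied --- limits of rescaled families against fixed or slowly moving targets --- the coefficients of the $Q_{i}$ are slowly moving with respect to $f$, so one may replace the moving hyperplanes by fixed ones, at the cost of a bounded correction to the characteristic and possibly after enlarging the ambient projective space, through the Ru--Stoll reduction; when the resulting holomorphic curve is linearly degenerate one passes to the smallest projective subspace carrying its image and regroups the hyperplanes into subfamilies in general position by Nochka's weight construction, and then appeals to Lemma \ref{lem:nochka} in its quantitative truncated form. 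The goal is a second main theorem inequality of the shape
\begin{equation*}
\bigl(q-(n-1)(n_{\mathcal{S}}+1)\bigr)\,T_{F}(r)\ \le\ \sum_{i=1}^{q}N^{[\,n_{\mathcal{S}}(n_{\mathcal{S}}+2)\,]}_{F}(r,H_{i})\ +\ o\bigl(T_{F}(r)\bigr),
\end{equation*}
the constants being exactly those for which $\frac{q-(n-1)(n_{\mathcal{S}}+1)}{n_{\mathcal{S}}(n_{\mathcal{S}}+2)}$ is the critical threshold; establishing this with precisely the right truncation level, and carrying the degeneracy bookkeeping, is the delicate part.

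For the last step, since $F$ meets each $H_{i}$ with multiplicity at least $l_{i}$, the truncated counting function obeys
\begin{equation*}
N^{[\,n_{\mathcal{S}}(n_{\mathcal{S}}+2)\,]}_{F}(r,H_{i})\ \le\ \frac{n_{\mathcal{S}}(n_{\mathcal{S}}+2)}{l_{i}}\,N_{F}(r,H_{i})\ \le\ \frac{n_{\mathcal{S}}(n_{\mathcal{S}}+2)}{l_{i}}\,T_{F}(r)+O(1).
\end{equation*}
Substituting this into the second main theorem inequality and using the hypothesis $\sum_{i=1}^{q}1/l_{i}<\bigl(q-(n-1)(n_{\mathcal{S}}+1)\bigr)/\bigl(n_{\mathcal{S}}(n_{\mathcal{S}}+2)\bigr)$ yields $\varepsilon_{0}\,T_{F}(r)\le o(T_{F}(r))$ for some $\varepsilon_{0}>0$, which is impossible unless $T_{F}(r)$ is bounded, i.e.\ unless $F$, and hence $f$, is constant. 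This contradicts our assumption and would complete the argument; the case $n=1$ (where the $S_{i}$ are moving point sets in $P^{1}(\mathbb{C})$ and the Veronese image is a rational normal curve) is handled in the same way.
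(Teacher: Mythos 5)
The paper does not prove this statement at all: it is quoted verbatim from \cite{hu} and used as a black box in the proof of Theorem \ref{thm:5}, so there is no in-paper argument to compare yours against. Judged on its own terms, your outline follows the natural route (Veronese embedding to linearize the hypersurfaces, then a second main theorem with truncated counting functions), but it has a genuine gap rather than a complete proof. The displayed inequality
$\bigl(q-(n-1)(n_{\mathcal{S}}+1)\bigr)T_{F}(r)\le\sum_{i}N^{[n_{\mathcal{S}}(n_{\mathcal{S}}+2)]}_{F}(r,H_{i})+o(T_{F}(r))$
\emph{is} essentially the content of the lemma; you state it as a goal and defer its proof to ``the delicate part,'' so the argument is not closed. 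Worse, the step you use to set it up is not available: you assert that the hyperplanes $H_{i}$ are in general position in $P^{n_{\mathcal{S}}}(\mathbb{C})$ ``by hypothesis,'' but the hypothesis of the lemma is only that the hypersurfaces $S_{i}$ are in \emph{weakly general position} in $P^{n}(\mathbb{C})$ (contrast with Theorem \ref{thm:4}, which explicitly assumes general position \emph{for the Veronese embedding}). Weak general position of $n+1$-tuples of the $Q_{i}$ does not make the linear forms $Q_{i}^{l_{\mathcal{S}}/d_{i}}$ independent in $(n_{\mathcal{S}}+1)$-tuples; after the embedding the $H_{i}$ are only in subgeneral position relative to the (possibly linearly degenerate) image of $F$, and it is precisely the Nochka-weight bookkeeping for that subgeneral position which produces the constants $(n-1)(n_{\mathcal{S}}+1)$ and $n_{\mathcal{S}}(n_{\mathcal{S}}+2)$. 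Without carrying out that step, the conclusion with these specific constants is not reached.

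Two smaller points: the $Q_{i}$ in this lemma have constant coefficients ($Q_{i}\in\mathbb{C}[x_{0},\ldots,x_{n}]$), so the moving-target reductions (slowly moving coefficients, Ru--Stoll) you invoke are not needed here; and the reduction of the several-variables statement to a holomorphic curve requires an argument (restriction to generic lines, or a genuinely several-variables second main theorem), which you mention as alternatives but do not choose or execute. The correct disposition for this statement in the present paper is simply to cite \cite{hu}, as the authors do.
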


Now, we are in a position to formulate our results.

\begin{thm}\label{thm:4}
Let $f\in\mathcal{H}(\mathbb{C}^{m}, P^{n}(\mathbb{C}))$ and let $S_1,\ldots, S_q$ be $q~(\geq 2n_{\mathcal{S}}+1)$ hypersurfaces, which are located in general position for Veronese embedding in $P^{n}(\mathbb{C}),$ defined by homogeneous polynomials $Q_j\in\mathbb{C}[x_0,\ldots,x_n],~j=1,\ldots, q$ having degree $d_j.$ Let $\hat{f}=(f_0,\ldots,f_n)$ be a reduced representation of $f$ on $\mathbb{C}^{m}$ with $f_i(z)\neq 0$ for some $i$ and $z,$ such that for all $j=1,\ldots, q$ and $l_j\geq 2,$ $$\sup\limits_{1\leq |\alpha|\leq l_j-1,~z\in f^{-1}(S_j)}\varphi^{|\alpha|}(\|z\|)\left|\frac{\partial^{\alpha}(Q_j(\hat{f_i}))}{\partial z^{\alpha}}(z)\right|<\infty,$$ where $l_j\in\mathbb{N}\cup\{\infty\}$ with $$\sum\limits_{j=1}^{q}\frac{d_j}{l_j}<\frac{(q-n_{\mathcal{S}}-1)l_{\mathcal{S}}}{n_{\mathcal{S}}},~ n_{\mathcal{S}}=\binom{n+l_{\mathcal{S}}}{n}-1 \mbox{ and } l_{\mathcal{S}}=\lcm(d_1,\ldots,d_q).$$  Then $f\in\mathcal{Y}_{\varphi}(\mathbb{C}^{m}, P^{n}(\mathbb{C})).$
\end{thm}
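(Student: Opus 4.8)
The plan is to argue by contradiction using the Zalcman-type rescaling of Theorem \ref{thm:1}. Assume $f\notin\mathcal{Y}_{\varphi}(\mathbb{C}^{m},P^{n}(\mathbb{C}))$. Then there exist sequences $\{z_j\},\{\xi_j\}\subset\mathbb{C}^{m}$ with $\|z_j\|\to\infty$, $\|\xi_j\|=1$, and $\{\rho_j\}\subset(0,1)$ with $\rho_j\to0$ such that
$$g_j(\zeta):=f\bigl(z_j+\rho_j\varphi(\|z_j\|)\xi_j\zeta\bigr)$$
converges uniformly on compact subsets of $\mathbb{C}$ to a non-constant holomorphic map $g:\mathbb{C}\to P^{n}(\mathbb{C})$. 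First I would fix reduced representations: write $\hat g_j=\hat f\circ(z_j+\rho_j\varphi(\|z_j\|)\xi_j\,\cdot\,)$ and, after dividing by a suitable holomorphic normalizing factor and passing to a subsequence, obtain a reduced representation $\hat g=(g^{(0)},\dots,g^{(n)})$ of $g$ to which the $\hat g_j$ converge locally uniformly. The hypersurfaces $S_1,\dots,S_q$ are the same fixed hypersurfaces for $g$, and they are in general position for the Veronese embedding, so Lemma \ref{lem:hu-thin} (with $n=n$, the same $n_{\mathcal{S}}$, and $q\ge 2n_{\mathcal{S}}+1\ge n n_{\mathcal{S}}+n+1$ failing in general — see the obstacle below) or, more precisely, Lemma \ref{lem:nochka} applied to $\nu_{l_{\mathcal{S}}}\circ g$ and the associated hyperplanes $H_1,\dots,H_q$ in $P^{n_{\mathcal{S}}}(\mathbb{C})$, will yield the contradiction once I show $g$ meets each $S_j$ with multiplicity at least $l_j$.

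The heart of the matter is the multiplicity claim: for each $j$, every zero of $Q_j(\hat g)$ has multiplicity at least $l_j$. To see this, fix $j$ and a point $\zeta_0$ with $Q_j(\hat g)(\zeta_0)=0$. Since $g$ is non-constant and the $S_j$ are in (weakly) general position, $Q_j(\hat g)\not\equiv0$, so by Hurwitz's theorem there are $\zeta_{j}\to\zeta_0$ with $Q_j(\hat g_j)(\zeta_{j})=0$, i.e.\ $f(w_j)\in S_j$ where $w_j:=z_j+\rho_j\varphi(\|z_j\|)\xi_j\zeta_{j}$, so $\|w_j\|\to\infty$. On an index $i$ with $f_i\ne0$ near $w_j$ one dehomogenizes and writes $Q_j(\hat g_j)(\zeta)=f_i^{d_j}(z_j+\cdots)\,Q_j(\hat f_i)(z_j+\rho_j\varphi(\|z_j\|)\xi_j\zeta)$, so the vanishing order of $Q_j(\hat g_j)$ at $\zeta_j$ equals that of the scalar function $\psi_j(\zeta):=Q_j(\hat f_i)(z_j+\rho_j\varphi(\|z_j\|)\xi_j\zeta)$. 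Then for $1\le|\alpha|\le l_j-1$ the chain rule gives $\partial_\zeta^{k}\psi_j$ as a combination of $\partial_z^{\alpha}(Q_j(\hat f_i))$ at $w_j$ times factors $\bigl(\rho_j\varphi(\|z_j\|)\bigr)^{k}\xi_j^{\alpha}$; the hypothesis
$$\sup_{1\le|\alpha|\le l_j-1,\ z\in f^{-1}(S_j)}\varphi^{|\alpha|}(\|z\|)\Bigl|\tfrac{\partial^{\alpha}(Q_j(\hat f_i))}{\partial z^{\alpha}}(z)\Bigr|<\infty$$
bounds $|\partial_z^{\alpha}(Q_j(\hat f_i))(w_j)|\le K_j\varphi(\|w_j\|)^{-|\alpha|}$, and since $\varphi(\|w_j\|)/\varphi(\|z_j\|)\to1$ (the Lipschitz estimate on $\varphi$ used already in Proposition \ref{prop:1}) together with $\rho_j\to0$, each derivative $\partial_\zeta^{k}\psi_j(\zeta_j)$ with $1\le k\le l_j-1$ tends to $0$. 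Passing to the limit, $Q_j(\hat g)$ and its first $l_j-1$ derivatives vanish at $\zeta_0$, proving the claim.

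Having established that $g$ intersects every $S_j$ with multiplicity $\ge l_j$, I would invoke the Picard-type statement. The cleanest route is through the Veronese embedding: set $h:=\nu_{l_{\mathcal{S}}}\circ g:\mathbb{C}\to P^{n_{\mathcal{S}}}(\mathbb{C})$, which is non-constant (as $\nu_{l_{\mathcal{S}}}$ is an embedding), and let $H_1,\dots,H_q$ be the associated moving hyperplanes; since the $S_j$ are in general position for Veronese embedding, the $H_j$ are in general position in $P^{n_{\mathcal{S}}}(\mathbb{C})$. The multiplicity of $h$ along $H_j$ is $\ge l_j/d_j\cdot l_{\mathcal{S}}$ — more precisely, translating the hypothesis $\sum_j d_j/l_j<(q-n_{\mathcal{S}}-1)l_{\mathcal{S}}/n_{\mathcal{S}}$ into $\sum_j (l_{\mathcal{S}}/d_j)\cdot(d_j/l_j)\cdot(1/l_{\mathcal{S}})<\cdots$ gives exactly the Nochka inequality $\sum_j 1/\tilde l_j<(q-n_{\mathcal{S}}-1)/n_{\mathcal{S}}$ with $\tilde l_j:=\lceil l_j l_{\mathcal{S}}/d_j\rceil$ the multiplicity bound of $h$ along $H_j$. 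Lemma \ref{lem:nochka} then forces $h$, hence $g$, to be constant — a contradiction. The main obstacle I anticipate is bookkeeping: pinning down the exact multiplicity of $h$ along $H_j$ in terms of $l_j$, $d_j$, $l_{\mathcal{S}}$ and checking that the stated numerical hypothesis $\sum_j d_j/l_j<(q-n_{\mathcal{S}}-1)l_{\mathcal{S}}/n_{\mathcal{S}}$ is precisely what makes Nochka's inequality hold for $h$; a secondary technical point is the uniform handling of the chart index $i$ (which $f_i\ne0$) along the sequence, resolved by passing to a further subsequence so that a single $i$ works for all large $j$.
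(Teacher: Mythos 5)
Your proposal follows essentially the same route as the paper's own proof: contradiction via the rescaling of Theorem \ref{thm:1}, the multiplicity-at-least-$l_j$ claim via Hurwitz's theorem combined with the derivative bounds and the estimate $\varphi(\|w_j\|)/\varphi(\|z_j\|)\to 1$, then the Veronese embedding $\nu_{l_{\mathcal{S}}}$ converting hypersurfaces to hyperplanes with multiplicity $(l_{\mathcal{S}}/d_j)l_j$, and finally Nochka's Lemma \ref{lem:nochka} (not Lemma \ref{lem:hu-thin}) to force $g$ constant. The numerical bookkeeping you flag as a possible obstacle works out exactly as you suspect, so the argument is correct and matches the paper.
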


\begin{proof}
Suppose that $f\not\in\mathcal{Y}_{\varphi}(\mathbb{C}^{m}, P^{n}(\mathbb{C})).$ Then by Theorem \ref{thm:1}, there exist sequences $\{z_t\},~\{\xi_t\}\subset\mathbb{C}^{m}$ with $\|z_t\|\rightarrow\infty$ and $\|\xi_t\|=1,$ $\{\rho_t\}\subset (0,1)$ with $\rho_t\rightarrow 0$ such that $$g_t(\zeta):=f(z_t+\rho_t\varphi(\|z_t\|)\xi_t\zeta)$$ converges uniformly on compact subsets of $\mathbb{C}$ to a non-constant mapping $g\in\mathcal{H}(\mathbb{C}, P^{n}(\mathbb{C})).$ Therefore, there exists a reduced representation $\hat{g}=(g_0,\ldots, g_n):\mathbb{C}\rightarrow\mathbb{C}^{n+1}$ such that for each $j\in\{1,\ldots,q\}$ and as $t\rightarrow\infty,$ we have $$\hat{g}_t(\zeta)=(g_{0t}(\zeta),\ldots,g_{nt}(\zeta))\rightarrow\hat{g}(\zeta) \mbox{ and } Q_j(\hat{g}_t(\zeta))\rightarrow Q_j(\hat{g}(\zeta))$$ uniformly on compact subsets of $\mathbb{C},$ where $\hat{g}_t(\zeta)=\hat{f}(z_t+\rho_t\varphi(\|z_t\|)\xi_t\zeta).$\\
{\bf Claim:} For each $j=1,\ldots,q,$ $g(\mathbb{C})$ intersects $S_j$ with multiplicity at least $l_j.$\\
Let $\zeta_0\in\mathbb{C}$ such that $Q_j(\hat{g}(\zeta_0))=0.$ Then there exist a disk $D(\zeta_0, r_0)\subset\mathbb{C}$ and an index $i\in\{0,\ldots,n\}$ such that $g(D(\zeta_0, r_0))\subset\{[x_0,\ldots,x_n]\in P^{n}(\mathbb{C}): x_i\neq 0\}$ and $g_i(\zeta)\neq 0$ on $D(\zeta_0, r_o).$ Thus by Hurwitz's theorem, for sufficiently large $t,$ $g_{it}(\zeta)\neq 0$ on $D(\zeta_0, r_0)$ and hence $\hat{g_{it}}(\zeta):=\left(\frac{g_{0t}}{g_{it}},\ldots,\frac{g_{nt}}{g_{it}}\right)\rightarrow \hat{g_i}(\zeta)$ uniformly on $D(\zeta_0, r_0).$ This further implies that $Q_j(\hat{g_{it}}(\zeta))\rightarrow Q_j(\hat{g_{i}}(\zeta))$ uniformly on $D(\zeta_0, r_0)$ and so for each $k\in\mathbb{N},$ $(Q_j(\hat{g_{it}}(\zeta)))^{(k)}\rightarrow (Q_j(\hat{g_{i}}(\zeta)))^{(k)}$ uniformly on $D(\zeta_0, r_0).$
By hypothesis, for all $j=1,\ldots, q$ and when $l_j\geq 2,$ we have $$\sup\limits_{1\leq|\alpha|\leq l_j-1,~z\in f^{-1}(S_j)}\varphi^{|\alpha|}(\|z\|)\left|\frac{\partial^{\alpha}(Q_j(\hat{f_{i}}))}{\partial z^{\alpha}}(z)\right|<\infty,$$ where $f_{i}(z)\neq 0$ and $S_{j}$ is the hypersurface associated to $Q_j.$ Since $Q_j(\hat{g}(\zeta_0))=0,$ by Hurwitz's theorem, there exists $\zeta_t\rightarrow\zeta_0$ such that for sufficiently large $t,$ $Q_j(\hat{g_{t}}(\zeta_t))=0.$ That is, $Q_j(\hat{f}(z_t+\rho_t\varphi(\|z_t\|)\xi_t\zeta_t))=0$ and so $z_t+\rho_t\varphi(\|z_t\|)\xi_t\zeta_t)\in f^{-1}(S_j).$ Thus there exists $K>0$ such that for sufficiently large $t,$ and when $l_j\geq 2,$ $$\varphi^{|\alpha|}(\|z_t+\rho_t\varphi(\|z_t\|)\xi_t\zeta_t)\|)\left|\frac{\partial^{\alpha}(Q_j(\hat{f_{i}}))}{\partial z^{\alpha}}(z_t+\rho_t\varphi(\|z_t\|)\xi_t\zeta_t))\right|<K$$ for some $i\in\{0,\ldots, n\}$ and all $\alpha:=(\alpha_1,\ldots,\alpha_m)$ with $1\leq |\alpha|\leq l_j-1.$ Then 
\begin{align*}
\left|(Q_j(\hat{g_{it}}(\zeta_t)))^{(|\alpha|)}\right|&=\left|\sum\limits_{\alpha}C_\alpha(\rho_t\varphi(\|z_t\|))^{|\alpha|}\frac{\partial^{\alpha}(Q_j(\hat{f_{i}}))}{\partial z_{1}^{\alpha_1}\cdots\partial z_{m}^{\alpha_m}}(z_t+\rho_t\varphi(\|z_t\|)\xi_t\zeta_t))\right|\\
&\leq C\rho_{t}^{|\alpha|}K\left(\frac{\varphi(\|z_t\|)}{\varphi(\|z_t+\rho_t\varphi(\|z_t\|)\xi_t\zeta_t)\|)}\right)^{|\alpha|}\\
&\rightarrow 0 \mbox{ as } t\rightarrow\infty,
\end{align*}
where $C,~C_\alpha$ are suitable constants. Thus $$(Q_j(\hat{g_{i}}(\zeta_0)))^{(|\alpha|)}=\lim\limits_{t\to\infty}(Q_j(\hat{g_{it}}(\zeta_t)))^{(|\alpha|)}=0$$ for $1\leq |\alpha|\leq l_j-1,$ showing that $\zeta_0$ is a zero of $Q_j(\hat{g})$ with multiplicity at least $l_j$ and this establishes the claim. Let $\nu_{l_{\mathcal{S}}}: P^{n}(\mathbb{C})\rightarrow P^{n_{\mathcal{S}}}(\mathbb{C})$ be the Veronese embedding of degree $l_{\mathcal{S}}.$ Then $\psi_t:=\nu_{l_{\mathcal{S}}}(g_t)$ converges uniformly on compact subsets of $D(\zeta_0, r_0)$ to $\psi:=\nu_{l_{\mathcal{S}}}(g_t).$ Let $\hat{\psi}$ be the reduced representation of $\psi.$ Since $Q_j^{l_{\mathcal{S}}/d_j}(\hat{g}(\zeta))=Q_{j}^{*}(\hat{\psi}(\zeta)),$ it follows that for each $j=1,\ldots, q,$ $\psi$ intersects $H_j^{*}$ with multiplicity at least $(l_{\mathcal{S}}/d_j)\cdot l_j,$ where $H_j^{*}$ is the hyperplane associated with the linear form $Q_j^{*}$ in $ P^{n_{\mathcal{S}}}(\mathbb{C}).$ Again, since the hyperplanes $H_1^{*},\ldots, H_{q}^{*}$ are in general position in $ P^{n_{\mathcal{S}}}(\mathbb{C}),$ by hypothesis, we have $$\sum\limits_{j=1}^{q}\frac{d_j}{l_j}<\frac{(q-n_{\mathcal{S}}-1)l_{\mathcal{S}}}{n_{\mathcal{S}}}.$$ Now from Lemma \ref{lem:nochka}, it follows that $\psi$ and hence $g$ must be a constant, which is a contradiction. Thus, $f\in\mathcal{Y}_{\varphi}(\mathbb{C}^{m}, P^{n}(\mathbb{C})).$ 
\end{proof}

\begin{thm}\label{thm:5}
Let $f\in\mathcal{H}(\mathbb{C}^{m}, P^{n}(\mathbb{C}))$ and let $S_1,\ldots, S_q$ be $q~(\geq nn_{\mathcal{S}}+2n+1)$ hypersurfaces, which are located in general position in $P^{n}(\mathbb{C}),$ defined by homogeneous polynomials $Q_j\in\mathbb{C}[x_0,\ldots,x_n],~j=1,\ldots, q$ having degree $d_j.$ Let $\hat{f}=(f_0,\ldots,f_n)$ be a reduced representation of $f$ on $\mathbb{C}^{m}$ with $f_i(z)\neq 0$ for some $i$ and $z,$ such that for all $j=1,\ldots, q$ and $l_j\geq 2,$ $$\sup\limits_{1\leq |\alpha|\leq l_j-1,~z\in f^{-1}(S_j)}\varphi^{|\alpha|}(\|z\|)\left|\frac{\partial^{\alpha}(Q_j(\hat{f_i}))}{\partial z^{\alpha}}(z)\right|<\infty,$$ where $l_j\in\mathbb{N}\cup\{\infty\}$ with $$\sum\limits_{j=1}^{q}\frac{1}{l_j}<\frac{(q-n-(n-1)(n_{\mathcal{S}}+1)}{n_{\mathcal{S}}(n_{\mathcal{S}}+2)},~ n_{\mathcal{S}}=\binom{n+l_{\mathcal{S}}}{n}-1 \mbox{ and } l_{\mathcal{S}}=\lcm(d_1,\ldots,d_q).$$  Then $f\in\mathcal{Y}_{\varphi}(\mathbb{C}^{m}, P^{n}(\mathbb{C})).$
\end{thm}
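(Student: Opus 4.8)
The plan is to follow the proof of Theorem~\ref{thm:4} almost verbatim, the only structural change being that the Veronese embedding together with Lemma~\ref{lem:nochka} is replaced by a direct appeal to the Picard-type theorem for moving hypersurfaces, Lemma~\ref{lem:hu-thin}. I would argue by contradiction: assuming $f\notin\mathcal{Y}_{\varphi}(\mathbb{C}^{m},P^{n}(\mathbb{C}))$, Theorem~\ref{thm:1} yields sequences $\{z_t\},\{\xi_t\}\subset\mathbb{C}^{m}$ with $\|z_t\|\to\infty$, $\|\xi_t\|=1$, and $\{\rho_t\}\subset(0,1)$ with $\rho_t\to 0$, such that $g_t(\zeta):=f(z_t+\rho_t\varphi(\|z_t\|)\xi_t\zeta)$ converges locally uniformly on $\mathbb{C}$ to a non-constant mapping $g\in\mathcal{H}(\mathbb{C},P^{n}(\mathbb{C}))$. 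Passing to reduced representations $\hat{g}_t=\hat{f}(z_t+\rho_t\varphi(\|z_t\|)\xi_t\zeta)$ and $\hat{g}$ and using Hurwitz's theorem on local affine charts, one obtains $Q_j(\hat{g}_t)\to Q_j(\hat{g})$, together with all of its derivatives, locally uniformly for each $j=1,\dots,q$.

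The heart of the matter is the multiplicity estimate, which I would reproduce exactly as in Theorem~\ref{thm:4}. Fix $j$ with $Q_j(\hat{g})\not\equiv 0$ and a zero $\zeta_0$ of $Q_j(\hat{g})$; choose a disc $D(\zeta_0,r_0)$ and an index $i$ with $g_i\neq 0$ there, normalise to $\hat{g}_{it}(\zeta)=\hat{f}_i(z_t+\rho_t\varphi(\|z_t\|)\xi_t\zeta)$ with $\hat{f}_i=\hat{f}/f_i$, and use Hurwitz to select $\zeta_t\to\zeta_0$ with $w_t:=z_t+\rho_t\varphi(\|z_t\|)\xi_t\zeta_t\in f^{-1}(S_j)$. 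Differentiating $\zeta\mapsto Q_j(\hat{f}_i)(z_t+\rho_t\varphi(\|z_t\|)\xi_t\zeta)$ $k$ times via the chain rule produces the factor $(\rho_t\varphi(\|z_t\|))^{k}$ multiplying a finite combination of the $\partial^{\alpha}(Q_j(\hat{f}_i))$ with $|\alpha|=k$; evaluating at $\zeta_t$, using $|\xi_t^{\alpha}|\le 1$ and the hypothesis $\varphi^{|\alpha|}(\|w_t\|)\,|\partial^{\alpha}(Q_j(\hat{f}_i))(w_t)|<K$, one gets $|(Q_j(\hat{g}_{it}))^{(k)}(\zeta_t)|\le CK\rho_t^{k}\bigl(\varphi(\|z_t\|)/\varphi(\|w_t\|)\bigr)^{k}\to 0$ for $1\le k\le l_j-1$, since $\rho_t\to 0$ while $\varphi(\|w_t\|)/\varphi(\|z_t\|)\to 1$ by the Lipschitz estimate on $\varphi$ already used in the proof of Proposition~\ref{prop:1}. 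Letting $t\to\infty$ gives $(Q_j(\hat{g}_i))^{(k)}(\zeta_0)=0$ for $0\le k\le l_j-1$, so $g$ meets $S_j$ with multiplicity at least $l_j$.

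The genuinely new point --- and the reason the constants in the hypotheses are shifted by $n$ relative to Lemma~\ref{lem:hu-thin} --- is the removal of those indices $j$ with $Q_j(\hat{g})\equiv 0$, i.e.\ $g(\mathbb{C})\subseteq S_j$. Since $S_1,\dots,S_q$ are in general position, any $n+1$ of them have empty common zero locus in $P^{n}(\mathbb{C})$, so $g(\mathbb{C})$ can lie in at most $n$ of them; discarding those leaves a subcollection indexed by $J'$ with $|J'|=q'\ge q-n\ge nn_{\mathcal{S}}+n+1$, still in general position, and with $Q_j(\hat{g})\not\equiv 0$ for $j\in J'$. For this subcollection the multiplicity estimate applies, and $\sum_{j\in J'}1/l_j\le\sum_{j=1}^{q}1/l_j<\frac{q-n-(n-1)(n_{\mathcal{S}}+1)}{n_{\mathcal{S}}(n_{\mathcal{S}}+2)}\le\frac{q'-(n-1)(n_{\mathcal{S}}+1)}{n_{\mathcal{S}}(n_{\mathcal{S}}+2)}$, so all hypotheses of Lemma~\ref{lem:hu-thin} are met by $g:\mathbb{C}\to P^{n}(\mathbb{C})$ (the case $m=1$) together with $\{S_j\}_{j\in J'}$. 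Lemma~\ref{lem:hu-thin} then forces $g$ to be constant, contradicting its construction, and hence $f\in\mathcal{Y}_{\varphi}(\mathbb{C}^{m},P^{n}(\mathbb{C}))$. I expect the only real obstacle to be this final bookkeeping --- verifying that at most $n$ hypersurfaces are discarded and that the strict inequality in the hypothesis survives replacing $q$ by $q'\ge q-n$; everything else is a transcription of the argument for Theorem~\ref{thm:4}.
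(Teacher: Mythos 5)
Your proposal is correct and follows essentially the same route as the paper's proof: rescaling via Theorem \ref{thm:1}, the multiplicity estimate transcribed from Theorem \ref{thm:4}, discarding the at most $n$ hypersurfaces containing $g(\mathbb{C})$ (using general position), verifying the shifted defect inequality for the remaining subcollection, and concluding with Lemma \ref{lem:hu-thin}. No substantive differences.
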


\begin{proof}
Suppose, on the contrary, that $f\not\in\mathcal{Y}_{\varphi}(\mathbb{C}^{m}, P^{n}(\mathbb{C})).$ Then by Theorem \ref{thm:1}, there exist sequences $\{z_t\},~\{\xi_t\}\subset\mathbb{C}^{m}$ with $\|z_t\|\rightarrow\infty$ and $\|\xi_t\|=1,$ $\{\rho_t\}\subset (0,1)$ with $\rho_t\rightarrow 0$ such that $$g_t(\zeta):=f(z_t+\rho_t\varphi(\|z_t\|)\xi_t\zeta)$$ converges uniformly on compact subsets of $\mathbb{C}$ to a non-constant mapping $g\in\mathcal{H}(\mathbb{C}, P^{n}(\mathbb{C})).$ Let $\hat{g}$ be a reduced representation of $g$ and let $J=\{j\in\{1,\ldots, q\}: Q_j(\hat{g})\equiv 0\}.$ Then $0\leq |J|\leq n,$ owing to the fact that the hypersurfaces $S_1,\ldots, S_q$ are in general position in $P^{n}(\mathbb{C})$ and so at most $n$ of these hypersurfaces can contain the image of $g.$ Since $q\geq nn_{\mathcal{S}}+2n+1,$ it follows that $q-|J|\geq nn_{\mathcal{S}}+n+1.$ By applying the same arguments as in the proof of Theorem \ref{thm:4}, one can easily see that $g(\mathbb{C})$ intersects with the hypersurfaces $\{S_j\}_{j\not\in J}$ with multiplicity at least $l_j.$ Further, a simple computation shows that 
\begin{align*}
\sum\limits_{j\not\in J}\frac{1}{l_j}\leq\sum\limits_{j=1}^{q}\frac{1}{l_j} &< \frac{(q-n-(n-1)(n_{\mathcal{S}}+1)}{n_{\mathcal{S}}(n_{\mathcal{S}}+2)}\\
&\leq \frac{(q-|J|-(n-1)(n_{\mathcal{S}}+1)}{n_{\mathcal{S}}(n_{\mathcal{S}}+2)}.
\end{align*}
This along with Lemma \ref{lem:hu-thin} imply that $g$ must be a constant, which is not in reason. Hence $f\in\mathcal{Y}_{\varphi}(\mathbb{C}^{m}, P^{n}(\mathbb{C})).$
\end{proof}

\bibliographystyle{amsplain}

\end{document}